\documentclass[12pt]{amsart}
\usepackage{amsfonts}
\usepackage{amssymb}
\usepackage{amsmath}
\usepackage{amsthm}
\usepackage{mathrsfs}
\usepackage{tikz-cd}
\usepackage[capitalise, noabbrev]{cleveref}
\usepackage{latexsym, caption, subcaption, comment}
\usepackage{mdwlist,dirtytalk,times}
\usepackage{url, ragged2e}

\textwidth=16.00cm
\textheight=22.00cm
\topmargin=0.00cm
\oddsidemargin=0.00cm
\evensidemargin=0.00cm
\headheight=0cm
\headsep=1cm
\headsep=0.5cm 
\numberwithin{equation}{section}
\hyphenation{semi-stable}
\setlength{\parskip}{4pt}

\DeclareMathOperator{\supp}{supp}

\DeclareMathOperator{\Ind}{Ind}
\DeclareMathOperator{\del}{del}
\DeclareMathOperator{\lk}{link}
\DeclareMathOperator{\sta}{star}
\DeclareMathOperator{\depth}{depth}
\DeclareMathOperator{\dep}{depth}
\DeclareMathOperator{\pd}{pd}
\DeclareMathOperator{\reg}{reg}
\DeclareMathOperator{\sign}{sign}

\newcommand{\m}{\mathfrak{m}}

\newcommand{\tuple}[1]{\langle #1\rangle}

\newcommand{\Z}{\mathbb{Z}}

\newcommand{\kk}{\mathbb{K}}

\newcommand{\F}{\mathcal{F}}
\newcommand{\G}{\mathcal{G}}

\newcommand{\lr}[2]{(#1 \mid #2)}

\newtheorem{theorem}{Theorem}[section]
\newtheorem{lemma}[theorem]{Lemma}
\newtheorem{proposition}[theorem]{Proposition}
\newtheorem{corollary}[theorem]{Corollary}

\newtheorem{question}[theorem]{Question}

\theoremstyle{definition}
\newtheorem{definition}[theorem]{Definition} 
\newtheorem{remark}[theorem]{Remark}
\newtheorem{example}[theorem]{Example}

\newcommand{\qand}{\quad \mbox{and} \quad}
\newcommand{\qif}{\quad \mbox{if} \quad}

\newcommand{\qforsome}{\quad \mbox{for some} \quad}
\newcommand{\qotherwise}{\quad \mbox{otherwise} \quad}

\newcommand{\st}{\colon}
\newcommand{\qwhere}{\quad \mbox{where} \quad}

\numberwithin{equation}{section}
\hyphenation{semi-stable}
\setlength{\parskip}{3pt}

\begin{document}

\title[]{Spherical Complexes} 

\author[S. Faridi]{Sara Faridi}
\address[S. Faridi]
{Department of Mathematics \& Statistics,
Dalhousie University,
6297 Castine Way,
PO BOX 15000,
Halifax, NS,
Canada B3H 4R2
}
\email{faridi@dal.ca}

\author[T. Holleben]{Thiago Holleben}
\address[T. Holleben]
{Department of Mathematics \& Statistics,
Dalhousie University,
6297 Castine Way,
PO BOX 15000,
Halifax, NS,
Canada B3H 4R2}
\email{hollebenthiago@dal.ca}

\keywords{Simplicial spheres, Leray number, Depth}
\subjclass[2020]{05E45, 55U10, 05E40, 13F55}

\begin{abstract} In this paper we define spherical complexes as simplicial complexes with the property that every subcomplex obtained by a sequence of links and deletions either has trivial homology, or has the homology of a sphere. Examples of such complexes are independence complexes of ternary graphs and independence complexes of simplicial forests.

   We give criteria for when a spherical complex is acyclic, and describe the dimension of the
    sphere when it is not. We then apply our results to compute the Leray number of these  complexes, and define combinatorial invariants for them which are counterparts to algebraic invariants of their Stanley-Reisner rings.
\end{abstract} 

\maketitle

\section{Introduction}

One of the most important classes of simplicial complexes in Combinatorics is the class of simplicial spheres: complexes that are homeomorphic to spheres. It is well known that for such complexes, there are strong restrictions imposed on the homology groups of special subcomplexes. More specifically, the links of every face of a simplicial sphere must have the homology groups of a sphere of the correct dimension.

Recently, Kim~\cite{kim} proved the following theorem, which was originally conjectured by Engstr\"om~\cite{engstrom}.

\begin{theorem}[Theorem 1.1, \cite{kim}]\label{ternaryintro}
    A graph $G$ has no induced cycles of length divisible by $3$ if and only if the independence complex of every induced subgraph of $G$ is either contractible or homotopy equivalent to a sphere.
\end{theorem}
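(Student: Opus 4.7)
The plan is to prove the two directions separately, with the reverse direction being straightforward and the forward direction requiring induction together with a structural lemma about ternary graphs.

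For the reverse direction ($\Leftarrow$) I would argue by contrapositive, invoking Kozlov's classical computation of $\Ind(C_n)$: it is homotopy equivalent to $S^{k-1}$ when $n = 3k \pm 1$ and to $S^{k-1} \vee S^{k-1}$ when $n = 3k$. Hence if $G$ contains an induced cycle of length $3k$, the induced subgraph on its vertex set has independence complex homotopy equivalent to a wedge of two spheres, which is neither contractible nor homotopy equivalent to a single sphere.

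For the forward direction ($\Rightarrow$) I would proceed by induction on $|V(G)|$, with the base case trivial. Given a vertex $v$, the standard decomposition
\[
\Ind(G) \;=\; \Ind(G \setminus v) \;\cup\; v * \Ind(G \setminus N[v]),
\]
together with the fact that the second piece is a cone on $\Ind(G \setminus N[v])$, yields a cofiber sequence
\[
\Ind(G \setminus N[v]) \;\hookrightarrow\; \Ind(G \setminus v) \;\longrightarrow\; \Ind(G).
\]
Since ternariness is preserved under induced subgraphs, the inductive hypothesis guarantees that $\Ind(G \setminus v)$ and $\Ind(G \setminus N[v])$ are each contractible or a sphere. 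It then remains to select $v$ so that the cofiber of this specific inclusion is again contractible or a sphere.

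The main obstacle is that, generically, the cofiber of a map between two spheres need not be a sphere: if the inclusion is nullhomotopic the result is a wedge of two spheres, precisely the forbidden homotopy type. I would therefore aim to establish a structural lemma: every ternary graph contains a vertex $v$ for which one of the following holds: (i) the inclusion $\Ind(G \setminus N[v]) \hookrightarrow \Ind(G \setminus v)$ is a homotopy equivalence, forcing $\Ind(G)$ to be contractible; or (ii) $\Ind(G \setminus v)$ is contractible, so that $\Ind(G) \simeq \Sigma\, \Ind(G \setminus N[v])$ is again contractible or a sphere. Standard fold reductions (deleting a vertex whose open neighborhood is contained in another's preserves homotopy type and ternariness) would first simplify $G$ to a rigid core, and the hard technical crux will be showing that, absent such a vertex $v$, one can trace a cycle through the non-trivial homology classes witnessing the failure of (i) and (ii) to produce an induced $C_{3k}$ in $G$ — contradicting the ternary assumption.
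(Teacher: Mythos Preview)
The paper does not give its own proof of this statement: it is quoted verbatim as Theorem~1.1 of Kim~\cite{kim} (originally a conjecture of Engstr\"om) and serves only as motivation for the definition of spherical complexes. There is therefore nothing in the paper to compare your proposal against.

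As to the proposal itself: the reverse direction via Kozlov's computation of $\Ind(C_n)$ is correct and standard. The forward direction, however, is not a proof but a restatement of the difficulty. Your ``structural lemma'' --- that every ternary graph has a vertex $v$ satisfying (i) or (ii) --- is essentially the theorem in disguise, and you offer no argument for it beyond the phrase ``trace a cycle through the non-trivial homology classes,'' which is not a mechanism one can carry out. Moreover the dichotomy (i)/(ii) is not exhaustive: when both $\Ind(G\setminus v)$ and $\Ind(G\setminus N[v])$ are spheres of adjacent dimensions and the inclusion is a degree-$\pm 1$ map, the cofiber is again a sphere, and this case genuinely arises. Kim's actual proof does not proceed by locating a single good vertex; it builds a recursive labelling scheme on the full binary tree of link/deletion subcomplexes and uses a parity/counting argument (of which \cref{triangles} and \cref{parity} in the present paper are homological shadows) to control the homotopy type globally. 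The fold-reduction preprocessing you mention is fine, but it does not by itself reduce the problem to a point where a one-vertex argument suffices.
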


By replacing the homotopy statements above to the weaker homology version, the topological condition on the independence complexes in \cref{ternaryintro} become similar to the one imposed on simplicial spheres. On one hand, the independence complex of \textit{every induced subcomplex} -- not only links, as is the case for simplicial spheres -- must have either the homology groups of a sphere, or have trivial homology groups. On the other hand, the only possible nonzero homology groups of links of faces of simplicial spheres are the top ones, while for the complexes in \cref{ternaryintro}, there is no such restriction.

More precisely, the complexes in~\cref{ternaryintro} have the property that every subcomplex that can be obtained by links and deletions either has trivial homology, or has the homology of a sphere. In this paper, we call such complexes \textit{spherical complexes}. 

Besides the independence complexes of ternary graphs, another large and well-known class of spherical complexes
is  Stanley-Reisner complexes of  simplicial forests (\cref{ex:ex-non-ex}).

The goal of this paper is to study properties that spherical complexes must satisfy, as a consequence of the homological conditions imposed by the definition.
We provide combinatorial methods to check whether a spherical complex is acyclic or not, and to check the dimension of the sphere when it is not. Our main result (\cref{t:topbetti})  gives us an upper and lower bound for the possible nonzero homologies of subcomplexes of $\Delta$ that can be obtained by a sequence of links and deletions. We state the result for induced subcomplexes and links of a spherical complex $\Delta$.

\begin{theorem}[See \cref{t:topbetti}]\label{t:topbettiintro}
    Let $\Delta$ be a spherical complex on $n$ vertices
    with $\tilde H_{d}(\Delta; \kk) \cong \kk$. 
   \begin{enumerate}
   \item If $\Sigma$ is an induced subcomplex of $\Delta$ on $m$ vertices such that  
     $\tilde H_{d'}(\Sigma, \kk) \cong \kk$, then
    $d - (n-m) \leq d' \leq d$.
 
    \item If  $\sigma \in \Delta$ is a face with $k$   vertices such that  
     $\tilde H_{d"}(\lk_{\Delta}(\sigma), \kk) \cong \kk$, then
    $d - k \leq d" \leq d$.
   \end{enumerate}
    In particular, the Leray number $L_\kk(\Delta) = d + 1$.
\end{theorem}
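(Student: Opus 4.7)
The plan is to prove the theorem via a single-vertex-operation lemma from Mayer-Vietoris, iterate by a double induction, and read off the Leray number at the end.

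For any simplicial complex $\Gamma$ and $v \in V(\Gamma)$, the Mayer-Vietoris long exact sequence associated with $\Gamma = \del_\Gamma(v) \cup \sta_\Gamma(v)$ (whose intersection is $\lk_\Gamma(v)$ and whose star factor is contractible) reads
\[
\cdots \to \tilde H_i(\lk_\Gamma(v);\kk) \to \tilde H_i(\del_\Gamma(v);\kk) \to \tilde H_i(\Gamma;\kk) \to \tilde H_{i-1}(\lk_\Gamma(v);\kk) \to \cdots.
\]
When $\Gamma$ is spherical with $\tilde H_{d^*}(\Gamma;\kk) = \kk$, both $\lk_\Gamma(v)$ and $\del_\Gamma(v)$ are spherical, so each has reduced homology concentrated in at most one degree. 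A short case analysis on the finitely many possibly nonzero terms (centered on degrees $d^*$ and $d^*-1$), combined with the alternating-sum constraint forced by exactness, shows that exactly one of the following holds: $\lk_\Gamma(v)$ is acyclic and $\tilde H_{d^*}(\del_\Gamma(v);\kk) = \kk$; or $\del_\Gamma(v)$ is acyclic and $\tilde H_{d^*-1}(\lk_\Gamma(v);\kk) = \kk$. (If $\Gamma$ is itself acyclic, the sequence collapses to $\tilde H_i(\lk_\Gamma(v)) \cong \tilde H_i(\del_\Gamma(v))$ for every $i$.) This dichotomy is the main technical lemma.

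With it in hand, I prove (1) and (2) jointly by induction on $n = |V(\Delta)|$. For (2), inner induction on $k$: writing $\sigma = \sigma' \cup \{w\}$ and applying the dichotomy to $\lk_\Delta(\sigma')$ at $w$, each link step either drops the nonzero homology degree by exactly one or produces an acyclic complex, giving $d'' \in [d-k, d-1]$ in the clean case; the acyclic subcase is controlled by tracing back to the latest non-acyclic link $\lk_\Delta(\sigma'')$ and invoking the dichotomy there to expose a non-acyclic sibling $\del_{\lk_\Delta(\sigma'')}(u)$, to which the outer induction on $n$ applies. For (1), inner induction on $n-m$: writing $\Delta[S] = \del_{\Delta[S']}(v)$ with $S' = S \cup \{v\}$ and applying the dichotomy at $v$ inside $\Delta[S']$ transfers the degree from $\Delta[S']$ to $\Delta[S]$ when $\Delta[S']$ is non-acyclic; otherwise the exact sequence forces $\tilde H_\ast(\Delta[S];\kk) \cong \tilde H_\ast(\lk_\Delta(v)[S];\kk)$, and I switch to the dichotomy applied to $\Delta$ itself at $v$: if $\del_\Delta(v)$ has degree $d$, then $\Delta[S] = \del_\Delta(v)[S]$ is an induced subcomplex there to which the outer induction applies; if instead $\lk_\Delta(v)$ has degree $d-1$, the isomorphism above identifies $\Delta[S]$ up to homology with an induced subcomplex of $\lk_\Delta(v)$, and again the outer induction supplies the bound. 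A short counting check confirms the lower bound stays at $d-(n-m)$ and the upper at $d$ in every subcase.

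Part (3) is then immediate from the upper bound in (1): $\tilde H_i(\Delta[S];\kk) = 0$ for every induced subcomplex and every $i > d$, so $L_\kk(\Delta) \le d+1$, with the matching lower bound coming from $\tilde H_d(\Delta;\kk) \ne 0$. The main obstacle throughout is the acyclic intermediate step in the iterations for (1) and (2), which breaks the naive degree-tracking invariant; the key to resolving it is that the single-operation dichotomy always exposes a non-acyclic sibling ($\del_\Delta(v)$ or $\lk_\Delta(v)$, or an analogous sibling at an earlier iteration), whose homology degree is controlled by the outer induction on $n$.
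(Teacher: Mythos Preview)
Your double-induction strategy (outer on $|V(\Delta)|$, inner on $k$ or on $n-m$) is genuinely different from the paper's. The paper proves a single unified statement for arbitrary link/deletion sequences $\Delta(X\mid Y)$ by induction on $|X|+|Y|$; in the problematic case where every one-step predecessor is acyclic, it argues that \emph{every} repartition $(C,D)$ of $X\cup Y$ must then give $d(C\mid D)=d_\Sigma\neq\ast$, and contradicts this by running a filtration of $\Delta$ through the vertices of $X\cup Y$. Your approach instead descends to a strictly smaller ambient spherical complex and invokes the outer hypothesis, which is more direct. Your argument for part~(1) and for the Leray number is correct.

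There is, however, a gap in your acyclic subcase for part~(2). You propose to trace back to the latest non-acyclic $\lk_\Delta(\sigma'')$ and apply the outer hypothesis to its sibling $\del_{\lk_\Delta(\sigma'')}(u)$. But $\lk_\Delta(\sigma)$ is not the link of any face of that sibling: reaching $\lk_\Delta(\sigma)$ from $\lk_\Delta(\sigma'')$ requires taking the \emph{link} at $u$, whereas the sibling has \emph{deleted} $u$. When $\sigma''=\emptyset$ (i.e.\ $\lk_\Delta(v_i)$ is acyclic for every $v_i\in\sigma$) this is fatal: the sibling is $\del_\Delta(u)$, and the Mayer--Vietoris isomorphism you obtain at an acyclic intermediate step is only an isomorphism on homology, so it does not allow you to take further links inside the sibling.

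The fix is to mirror your part~(1) argument exactly. With $\sigma=\sigma'\cup\{w\}$ and $\lk_\Delta(\sigma')$ acyclic, Mayer--Vietoris for $\lk_\Delta(\sigma')$ at $w$ together with the identity $\del_{\lk_\Delta(\sigma')}(w)=\lk_{\del_\Delta(w)}(\sigma')$ gives $\tilde H_\ast\big(\lk_\Delta(\sigma);\kk\big)\cong\tilde H_\ast\big(\lk_{\del_\Delta(w)}(\sigma');\kk\big)$. Now apply the dichotomy to $\Delta$ itself at $w$: if $\lk_\Delta(w)$ is non-acyclic (degree $d-1$) then $\lk_\Delta(\sigma)=\lk_{\lk_\Delta(w)}(\sigma')$ and the outer hypothesis on $\lk_\Delta(w)$ gives $d''\in[d-k,\,d-1]$; if $\del_\Delta(w)$ is non-acyclic (degree $d$) then, via the displayed isomorphism, the outer hypothesis on $\del_\Delta(w)$ with face $\sigma'$ gives $d''\in[d-k+1,\,d]$. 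Either way $d''\in[d-k,\,d]$.
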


The \textit{Leray number} mentioned in \cref{t:topbettiintro} 
 is a topological invariant that appears naturally in the context of Helly type theorems, and can be seen as a measure of the complexity of $\Delta$. It is defined as 
$$
    L_\kk(\Delta) = \min \{d \st \tilde H_d(\Gamma, \kk) = 0  \text{ for all induced subcomplexes }\Gamma \subseteq \Delta\}.
$$
The Leray number of $\Delta$ has connections to the Castelnuovo-Mumford regularity of the Stanley-Reisner ideal of $\Delta$ in Commutative Algebra. 

In \cref{s:homology}
we introduce three combinatorial invariants -- sign, depth and projective dimension -- for a  spherical complex. The sign is used  to give a criterion for when  a spherical complex is acyclic in \cref{ternarytype}. 
 We show in~\cref{s:commalg} that the
names of the other two invariants are justified, as they are equal to their algebraic counterparts. 

We then apply our results to Commutative Algebra, and show that the homological restrictions imposed by spherical complexes imply the following relation between two a priori unrelated algebraic invariants.

\begin{theorem}[See \cref{depthreg}]\label{t:depthregintro}
    Let $\Delta$ be a non-acyclic spherical complex and $I_\Delta \subset R$ the Stanley-Reisner ideal of $\Delta$. Then 
    $$
        \reg(R/I_\Delta) = \depth(R/I_\Delta).
    $$
\end{theorem}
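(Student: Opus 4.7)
The strategy is to show that both invariants equal $d+1$, where $d$ is the unique integer with $\tilde H_d(\Delta;\kk) \cong \kk$ (such a $d$ exists because $\Delta$ is non-acyclic and spherical). The two halves of the equality are then handled by the two standard algebraic reformulations of the homology of $\Delta$.

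For the regularity, the plan is to invoke the Kalai--Meshulam theorem, which asserts $\reg(R/I_\Delta) = L_\kk(\Delta)$ for any simplicial complex $\Delta$ on $n$ vertices. Combining this with the last sentence of \cref{t:topbettiintro}, which gives $L_\kk(\Delta) = d+1$, yields $\reg(R/I_\Delta) = d+1$ immediately.

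For the depth, the plan is to combine Hochster's formula
\[
\beta_{i,j}(R/I_\Delta) = \sum_{W \subseteq V(\Delta),\, |W| = j} \dim_\kk \tilde H_{j-i-1}(\Delta_W;\kk)
\]
with the Auslander--Buchsbaum formula $\depth(R/I_\Delta) = n - \pd(R/I_\Delta)$. If $\beta_{i,j}(R/I_\Delta) \neq 0$, there is an induced subcomplex $\Delta_W$ on $j$ vertices with $\tilde H_{j-i-1}(\Delta_W;\kk) \neq 0$; since induced subcomplexes of a spherical complex are again spherical (they are obtained from $\Delta$ by repeated deletions), part (1) of \cref{t:topbettiintro} applied to $\Delta_W$ forces $d - (n-j) \leq j - i - 1$, which rearranges to $i \leq n - d - 1$. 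Hence $\pd(R/I_\Delta) \leq n - d - 1$. The reverse inequality is immediate on taking $W = V(\Delta)$, $j = n$ and $i = n - d - 1$: then $\tilde H_{j-i-1}(\Delta_W;\kk) = \tilde H_d(\Delta;\kk) \cong \kk$, contributing to $\beta_{n-d-1,\,n}(R/I_\Delta)$. Therefore $\pd(R/I_\Delta) = n - d - 1$, and by Auslander--Buchsbaum $\depth(R/I_\Delta) = d + 1 = \reg(R/I_\Delta)$.

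The step I expect to require the most care is bookkeeping: confirming that the paper's indexing convention for $L_\kk(\Delta)$ aligns with the form of the Kalai--Meshulam identity used above, so that the two computations of $d+1$ really land on the same integer rather than differing by one. The geometric content of the proof is concentrated entirely in part (1) of \cref{t:topbettiintro}: both endpoints of the bound $d - (n-j) \leq d' \leq d$ are essential, the upper endpoint pinning $\reg$ and the lower endpoint pinning $\pd$, and together these two constraints force the extremal Betti number to lie on the single diagonal $j - i = d + 1$, where both invariants are realized simultaneously.
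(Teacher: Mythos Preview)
Your proof is correct and follows essentially the same route as the paper: both arguments use Hochster's formula, the Auslander--Buchsbaum formula, and the two bounds of \cref{t:topbettiintro}, with the upper bound $d' \le d$ controlling regularity and the lower bound $d - (n-j) \le d'$ controlling projective dimension. The only cosmetic difference is packaging: the paper routes the computation through its combinatorial invariants $\dep(\Delta)$ and $\pd(\Delta)$ (via \cref{mainthm} and \cref{algcombinvs}) and proves the regularity half directly from Hochster rather than citing the Leray--regularity identity, whereas you invoke $L_\kk(\Delta) = \reg(R/I_\Delta)$ as a black box and work with $d+1$ throughout; the underlying content is identical.
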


 \cref{t:depthregintro} not only gives an equality between two a priori unrelated invariants (i.e, there are examples where regularity can be less, equal or more than the depth), but also sheds some light into how different previous approaches on the study of different homological invariants of Stanley-Reisner ideals are related. 
    
   \section{Preliminaries: spherical complexes}

    Let $V = \{v_1, \dots, v_n\}$ be a finite set. A
    \textbf{simplicial complex} over $V$ is a family of subsets
    $\Delta$ of $V$ such that $\sigma \in \Delta$ for every $\sigma
    \subseteq \tau \in \Delta$. Elements of $\Delta$ are called
    \textbf{faces} of $\Delta$ and maximal faces are called
    \textbf{facets}.  Given a set $A\subseteq V$, the \textbf{induced
      subcomplex} of $\Delta$ on $A$ is the simplicial complex
    $\Delta_A$ where $\tau \subseteq V$ is a face of $\Delta_A$ if and
    only if $\tau \subseteq A$ and $\tau \in \Delta$.

    A simplicial complex with facets $F_1,\ldots,F_q$ can be denoted by
    $\tuple{F_1,\ldots,F_q}$.
   
    We denote the {\bf void complex} with no faces by where $\{\}$. 
    
    We denote the simplicial complex that has the empty set as its only face as $\emptyset$.
    
    If $\Delta$ is a simplicial complex and $v$ is a vertex of $\Delta$, then \begin{itemize}
    \item $\Delta$ is a \textbf{cone} over $v$ if $\tau \cup v$ is a face of $\Delta$ for every face
    $\tau$ of $\Delta$;
  \item the {\bf deletion} of $v$ is the simplicial complex 
 $\del_\Delta(v)=\{\sigma \in \Delta \st v \notin \sigma\}$;
  
  if $w$ is a vertex not in $\Delta$, then we set $\del_\Delta(w)=\Delta$;
   \item the {\bf link} of $v$ is the simplicial complex $\lk_\Delta(v) = \{\sigma \in \del_\Delta(v) \st   \sigma \cup \{v \} \in \Delta \}$;
    
    if $w$ is a vertex not in $\Delta$, then we set $\lk_\Delta(w)=\{\}$;
   \item the {\bf star} of $v$ is the simplicial complex~$\sta_\Delta(v) = \{\sigma \in \Delta \st \sigma \cup \{v\} \in \Delta\}$;
   \item the \textbf{Alexander dual} of $\Delta$ is the simplicial complex
    $\Delta^\vee = \{V \setminus \tau \st \tau \not \in \Delta\}$, and has the following property for a field $\kk$ and  all $i$
    $$\dim_\kk \tilde H_{n - i - 2}(\Delta;\kk) = \dim_\kk \tilde H_{i - 1}(\Delta^\vee;\kk).$$
   \item  The  \textbf{independence complex} $\Ind(\Delta)$ of $\Delta$ is the  simplicial complex whose faces are the independent sets of $\Delta$, where an {\bf independent set} of $\Delta$ is a subset $A$ of  the vertex set of $\Delta$ such that if $\sigma \subseteq A$, $\sigma$ does not contain a facet of $\Delta$ (note that this generalizes the independence complex of a graph when the graph is considered as a one-dimensional simplicial complex.

\end{itemize}

\begin{example}\label{example0}
   A graph $H$ and its (contractible) independence complex $\Ind(H)$ appear below.

\begin{center}
\begin{tabular}{cc}
        \begin{tikzpicture}[x=0.75pt,y=0.75pt,yscale=-1,xscale=1]
        \draw    (208.29,141.02) -- (320.29,141.02) ;
        \draw [shift={(320.29,141.02)}, rotate = 0] [color={rgb, 255:red, 0; green, 0; blue, 0 }  ][fill={rgb, 255:red, 0; green, 0; blue, 0 }  ][line width=0.75]      (0, 0) circle [x radius= 3.35, y radius= 3.35]   ;
        \draw [shift={(208.29,141.02)}, rotate = 0] [color={rgb, 255:red, 0; green, 0; blue, 0 }  ][fill={rgb, 255:red, 0; green, 0; blue, 0 }  ][line width=0.75]      (0, 0) circle [x radius= 3.35, y radius= 3.35]   ;
        \draw    (208.29,141.02) -- (190.29,91.02) ;
        \draw [shift={(190.29,91.02)}, rotate = 250.2] [color={rgb, 255:red, 0; green, 0; blue, 0 }  ][fill={rgb, 255:red, 0; green, 0; blue, 0 }  ][line width=0.75]      (0, 0) circle [x radius= 3.35, y radius= 3.35]   ;
        \draw [shift={(208.29,141.02)}, rotate = 250.2] [color={rgb, 255:red, 0; green, 0; blue, 0 }  ][fill={rgb, 255:red, 0; green, 0; blue, 0 }  ][line width=0.75]      (0, 0) circle [x radius= 3.35, y radius= 3.35]   ;
        \draw    (302.29,92.02) -- (320.29,141.02) ;
        \draw [shift={(320.29,141.02)}, rotate = 69.83] [color={rgb, 255:red, 0; green, 0; blue, 0 }  ][fill={rgb, 255:red, 0; green, 0; blue, 0 }  ][line width=0.75]      (0, 0) circle [x radius= 3.35, y radius= 3.35]   ;
        \draw [shift={(302.29,92.02)}, rotate = 69.83] [color={rgb, 255:red, 0; green, 0; blue, 0 }  ][fill={rgb, 255:red, 0; green, 0; blue, 0 }  ][line width=0.75]      (0, 0) circle [x radius= 3.35, y radius= 3.35]   ;
        \draw    (350.29,95.02) -- (320.29,141.02) ;
        \draw [shift={(320.29,141.02)}, rotate = 123.11] [color={rgb, 255:red, 0; green, 0; blue, 0 }  ][fill={rgb, 255:red, 0; green, 0; blue, 0 }  ][line width=0.75]      (0, 0) circle [x radius= 3.35, y radius= 3.35]   ;
        \draw [shift={(350.29,95.02)}, rotate = 123.11] [color={rgb, 255:red, 0; green, 0; blue, 0 }  ][fill={rgb, 255:red, 0; green, 0; blue, 0 }  ][line width=0.75]      (0, 0) circle [x radius= 3.35, y radius= 3.35]   ;

        \draw (178,75) node [anchor=north west][inner sep=0.75pt]   [align=left] {$\displaystyle a$};
        \draw (196,145) node [anchor=north west][inner sep=0.75pt]   [align=left] {$\displaystyle b$};
        \draw (322.29,144.02) node [anchor=north west][inner sep=0.75pt]   [align=left] {$\displaystyle c$};
        \draw (289,73) node [anchor=north west][inner sep=0.75pt]   [align=left] {$\displaystyle d$};
        \draw (352,77) node [anchor=north west][inner sep=0.75pt]   [align=left] {$\displaystyle e$};
        \end{tikzpicture}
$\quad$ & $\quad$ 
        \begin{tikzpicture}[x=0.75pt,y=0.75pt,yscale=-1,xscale=1]
        
        \draw  [fill={rgb, 255:red, 155; green, 155; blue, 155 }  ,fill opacity=1 ][line width=0.75]  (286,115.91) -- (321,156) -- (251,156) -- cycle ;
        \draw  [fill={rgb, 255:red, 155; green, 155; blue, 155 }  ,fill opacity=1 ] (285.95,195.96) -- (251,156) -- (321,156) -- cycle ;
        \draw    (286,116) -- (345,156) ;

          \draw [shift={(345,156)}, rotate = 250.2] [color={rgb, 255:red, 0; green, 0; blue, 0 }  ][fill={rgb, 255:red, 0; green, 0; blue, 0 }  ][line width=0.75]      (0, 0) circle [x radius= 3.35, y radius= 3.35]   ;

  \draw [shift={(286,115.91)}, rotate = 250.2] [color={rgb, 255:red, 0; green, 0; blue, 0 }  ][fill={rgb, 255:red, 0; green, 0; blue, 0 }  ][line width=0.75]      (0, 0) circle [x radius= 3.35, y radius= 3.35]   ;

    \draw [shift={(321,156)}, rotate = 250.2] [color={rgb, 255:red, 0; green, 0; blue, 0 }  ][fill={rgb, 255:red, 0; green, 0; blue, 0 }  ][line width=0.75]      (0, 0) circle [x radius= 3.35, y radius= 3.35]   ;

      \draw [shift={(251,156)}, rotate = 250.2] [color={rgb, 255:red, 0; green, 0; blue, 0 }  ][fill={rgb, 255:red, 0; green, 0; blue, 0 }  ][line width=0.75]      (0, 0) circle [x radius= 3.35, y radius= 3.35]   ;

        \draw [shift={(285.95,195.96)}, rotate = 250.2] [color={rgb, 255:red, 0; green, 0; blue, 0 }  ][fill={rgb, 255:red, 0; green, 0; blue, 0 }  ][line width=0.75]      (0, 0) circle [x radius= 3.35, y radius= 3.35]   ;

        \draw (270,110) node [anchor=north west][inner sep=0.75pt]    [align=left] {$\displaystyle a$};
        \draw (270,190) node [anchor=north west][inner sep=0.75pt]   [align=left] {$\displaystyle b$};
        \draw (348, 160) node [anchor=north west][inner sep=0.75pt]   [align=left] {$\displaystyle c$};
        \draw (233,160) node [anchor=north west][inner sep=0.75pt]   [align=left] {$\displaystyle d$};
        \draw (328,160) node [anchor=north west][inner sep=0.75pt]   [align=left] {$\displaystyle e$};

        \end{tikzpicture}
\end{tabular}
\end{center}
\end{example}

\begin{example}\label{example1}
    A complex $\Delta$ and its (non-acyclic) independence complex $\Ind(\Delta)$ appear below.

    \tikzset{every picture/.style={line width=0.75pt}} 
\begin{center}
\begin{tikzpicture}[x=0.75pt,y=0.75pt,yscale=-1,xscale=1]

\draw  [fill={rgb, 255:red, 155; green, 155; blue, 155 }  ,fill opacity=1 ] (136,111) -- (171,151) -- (101,151) -- cycle ;
\draw    (136,111) -- (136.82,77.82) ;
\draw [shift={(136.82,77.82)}, rotate = 271.41] [color={rgb, 255:red, 0; green, 0; blue, 0 }  ][fill={rgb, 255:red, 0; green, 0; blue, 0 }  ][line width=0.75]      (0, 0) circle [x radius= 3.35, y radius= 3.35]   ;
\draw [shift={(136,111)}, rotate = 271.41] [color={rgb, 255:red, 0; green, 0; blue, 0 }  ][fill={rgb, 255:red, 0; green, 0; blue, 0 }  ][line width=0.75]      (0, 0) circle [x radius= 3.35, y radius= 3.35]   ;
\draw    (171,151) -- (101,151) ;
\draw [shift={(101,151)}, rotate = 180] [color={rgb, 255:red, 0; green, 0; blue, 0 }  ][fill={rgb, 255:red, 0; green, 0; blue, 0 }  ][line width=0.75]      (0, 0) circle [x radius= 3.35, y radius= 3.35]   ;
\draw [shift={(171,151)}, rotate = 180] [color={rgb, 255:red, 0; green, 0; blue, 0 }  ][fill={rgb, 255:red, 0; green, 0; blue, 0 }  ][line width=0.75]      (0, 0) circle [x radius= 3.35, y radius= 3.35]   ;
\draw  [fill={rgb, 255:red, 155; green, 155; blue, 155 }  ,fill opacity=1 ] (296,109) -- (331,149) -- (261,149) -- cycle ;
\draw    (261,149) -- (296.82,75.82) ;
\draw [shift={(296.82,75.82)}, rotate = 296.08] [color={rgb, 255:red, 0; green, 0; blue, 0 }  ][fill={rgb, 255:red, 0; green, 0; blue, 0 }  ][line width=0.75]      (0, 0) circle [x radius= 3.35, y radius= 3.35]   ;
\draw [shift={(261,149)}, rotate = 296.08] [color={rgb, 255:red, 0; green, 0; blue, 0 }  ][fill={rgb, 255:red, 0; green, 0; blue, 0 }  ][line width=0.75]      (0, 0) circle [x radius= 3.35, y radius= 3.35]   ;
\draw    (331,149) -- (261,149) ;
\draw [shift={(261,149)}, rotate = 180] [color={rgb, 255:red, 0; green, 0; blue, 0 }  ][fill={rgb, 255:red, 0; green, 0; blue, 0 }  ][line width=0.75]      (0, 0) circle [x radius= 3.35, y radius= 3.35]   ;
\draw [shift={(331,149)}, rotate = 180] [color={rgb, 255:red, 0; green, 0; blue, 0 }  ][fill={rgb, 255:red, 0; green, 0; blue, 0 }  ][line width=0.75]      (0, 0) circle [x radius= 3.35, y radius= 3.35]   ;
\draw    (331,149) -- (296.82,75.82) ;
\draw [shift={(296.82,75.82)}, rotate = 244.96] [color={rgb, 255:red, 0; green, 0; blue, 0 }  ][fill={rgb, 255:red, 0; green, 0; blue, 0 }  ][line width=0.75]      (0, 0) circle [x radius= 3.35, y radius= 3.35]   ;
\draw [shift={(331,149)}, rotate = 244.96] [color={rgb, 255:red, 0; green, 0; blue, 0 }  ][fill={rgb, 255:red, 0; green, 0; blue, 0 }  ][line width=0.75]      (0, 0) circle [x radius= 3.35, y radius= 3.35]   ;
\draw    (331,149) -- (296,109) ;
\draw [shift={(296,109)}, rotate = 228.81] [color={rgb, 255:red, 0; green, 0; blue, 0 }  ][fill={rgb, 255:red, 0; green, 0; blue, 0 }  ][line width=0.75]      (0, 0) circle [x radius= 3.35, y radius= 3.35]   ;
\draw [shift={(331,149)}, rotate = 228.81] [color={rgb, 255:red, 0; green, 0; blue, 0 }  ][fill={rgb, 255:red, 0; green, 0; blue, 0 }  ][line width=0.75]      (0, 0) circle [x radius= 3.35, y radius= 3.35]   ;

\draw (86,152) node [anchor=north west][inner sep=0.75pt]   [align=left] {$\displaystyle x$};
\draw (173,154) node [anchor=north west][inner sep=0.75pt]   [align=left] {$\displaystyle y$};
\draw (145,100) node [anchor=north west][inner sep=0.75pt]   [align=left] {$\displaystyle z$};
\draw (132,56) node [anchor=north west][inner sep=0.75pt]   [align=left] {$\displaystyle w$};
\draw (246,150) node [anchor=north west][inner sep=0.75pt]   [align=left] {$\displaystyle x$};
\draw (333,152) node [anchor=north west][inner sep=0.75pt]   [align=left] {$\displaystyle y$};
\draw (290,90) node [anchor=north west][inner sep=0.75pt]   [align=left] {$\displaystyle w$};
\draw (292,54) node [anchor=north west][inner sep=0.75pt]   [align=left] {$\displaystyle z$};

\end{tikzpicture}
\end{center}
\end{example}

 \begin{definition}[{\bf Spherical complex}]
    For a field $\kk$ a simplicial complex $\Delta$ on vertex set $V$ is called {\bf $\kk$-spherical} if 
    \begin{enumerate}
        \item $\tilde{H}_i(\Delta;\kk)=
        \begin{cases} 1 & \mbox{ for at most one value of } i\\
        0 & \mbox{otherwise};
        \end{cases}$
        \medskip
        \item for every $v \in V$ the two complexes $\lk_\Delta(v)$ and $\del_\Delta(v)$ are $\kk$-spherical.
    \end{enumerate}
\end{definition}

\begin{example}[{\bf Examples and non-examples of spherical complexes}]\label{ex:ex-non-ex}

\medskip

\begin{enumerate}
\item A large class of spherical complexes is the family of independence complexes of ternary graphs~\cite{kim}. A graph is called a {\bf ternary graph} if it does not have induced cycles of length divisible by $3$. Ternary graphs are spherical since the ternary property is preserved by taking induced subgraphs, and for independence complexes of graphs, taking links corresponds to taking special induced subgraphs. The graph $H$ in \cref{example0} is an example of a ternary graph, it is in fact a tree.

\item Another large class is independence complexes of {\em simplicial forests}. In~\cite{NF2014}, Erey and Faridi showed that such complexes  either have trivial homology groups, or the homology groups of  spheres. Moreover, the link and deletion of simplicial forests are again simplicial forests~(\cite[Lemma~7]{simplicialforests}, \cite[Lemma~1]{facet-ideal}). The complex $\Delta$ in~\cref{example1} is an example of a simplicial forest.

\item In order to find non-examples of spherical complexes, we can take the independence complex of any graph that contains a cycle of length divisible by $3$. Take for example the graph $R_3$ in \cref{f:sns}, which is the complement of a hexagon. 
To see that $\Ind(R_3)$ is not spherical, take the subcomplex $\Sigma$ obtained by deleting the vertices $2,4,6$. Since $\Sigma$ consists of $3$ points, it has 
    $$
        \tilde H_0(\Sigma) \cong \Z \oplus \Z.
    $$
Note that $\Ind(R_3)$ is a simplicial sphere that is not a spherical complex.

\begin{figure}
\tikzset{every picture/.style={line width=0.75pt}} 

\begin{tikzpicture}[x=0.75pt,y=0.75pt,yscale=-1,xscale=1]

\draw   (79,54) -- (171,54) -- (171,94) -- (79,94) -- cycle ;
\draw    (171,54) -- (137,75.4) ;
\draw [shift={(137,75.4)}, rotate = 147.81] [color={rgb, 255:red, 0; green, 0; blue, 0 }  ][fill={rgb, 255:red, 0; green, 0; blue, 0 }  ][line width=0.75]      (0, 0) circle [x radius= 3.35, y radius= 3.35]   ;
\draw [shift={(171,54)}, rotate = 147.81] [color={rgb, 255:red, 0; green, 0; blue, 0 }  ][fill={rgb, 255:red, 0; green, 0; blue, 0 }  ][line width=0.75]      (0, 0) circle [x radius= 3.35, y radius= 3.35]   ;
\draw    (171,94) -- (137,75.4) ;
\draw [shift={(137,75.4)}, rotate = 208.68] [color={rgb, 255:red, 0; green, 0; blue, 0 }  ][fill={rgb, 255:red, 0; green, 0; blue, 0 }  ][line width=0.75]      (0, 0) circle [x radius= 3.35, y radius= 3.35]   ;
\draw [shift={(171,94)}, rotate = 208.68] [color={rgb, 255:red, 0; green, 0; blue, 0 }  ][fill={rgb, 255:red, 0; green, 0; blue, 0 }  ][line width=0.75]      (0, 0) circle [x radius= 3.35, y radius= 3.35]   ;
\draw    (108,75.4) -- (79,54) ;
\draw [shift={(79,54)}, rotate = 216.42] [color={rgb, 255:red, 0; green, 0; blue, 0 }  ][fill={rgb, 255:red, 0; green, 0; blue, 0 }  ][line width=0.75]      (0, 0) circle [x radius= 3.35, y radius= 3.35]   ;
\draw [shift={(108,75.4)}, rotate = 216.42] [color={rgb, 255:red, 0; green, 0; blue, 0 }  ][fill={rgb, 255:red, 0; green, 0; blue, 0 }  ][line width=0.75]      (0, 0) circle [x radius= 3.35, y radius= 3.35]   ;
\draw    (108,75.4) -- (79,94) ;
\draw [shift={(79,94)}, rotate = 147.32] [color={rgb, 255:red, 0; green, 0; blue, 0 }  ][fill={rgb, 255:red, 0; green, 0; blue, 0 }  ][line width=0.75]      (0, 0) circle [x radius= 3.35, y radius= 3.35]   ;
\draw [shift={(108,75.4)}, rotate = 147.32] [color={rgb, 255:red, 0; green, 0; blue, 0 }  ][fill={rgb, 255:red, 0; green, 0; blue, 0 }  ][line width=0.75]      (0, 0) circle [x radius= 3.35, y radius= 3.35]   ;
\draw    (137,75.4) -- (108,75.4) ;
\draw [shift={(108,75.4)}, rotate = 180] [color={rgb, 255:red, 0; green, 0; blue, 0 }  ][fill={rgb, 255:red, 0; green, 0; blue, 0 }  ][line width=0.75]      (0, 0) circle [x radius= 3.35, y radius= 3.35]   ;
\draw [shift={(137,75.4)}, rotate = 180] [color={rgb, 255:red, 0; green, 0; blue, 0 }  ][fill={rgb, 255:red, 0; green, 0; blue, 0 }  ][line width=0.75]      (0, 0) circle [x radius= 3.35, y radius= 3.35]   ;
\draw   (340,74) -- (327.5,95.65) -- (302.5,95.65) -- (290,74) -- (302.5,52.35) -- (327.5,52.35) -- cycle ;

\draw (110,123) node [anchor=north west][inner sep=0.75pt]   [align=left] {$\displaystyle R_{3}$};
\draw (67,35) node [anchor=north west][inner sep=0.75pt]   [align=left] {$\displaystyle 1$};
\draw (130,55) node [anchor=north west][inner sep=0.75pt]   [align=left] {$\displaystyle 2$};
\draw (67,97) node [anchor=north west][inner sep=0.75pt]   [align=left] {$\displaystyle 3$};
\draw (173,36) node [anchor=north west][inner sep=0.75pt]   [align=left] {$\displaystyle 4$};
\draw (103,55) node [anchor=north west][inner sep=0.75pt]   [align=left] {$\displaystyle 5$};
\draw (173,97) node [anchor=north west][inner sep=0.75pt]   [align=left] {$\displaystyle 6$};
\draw (292,36) node [anchor=north west][inner sep=0.75pt]   [align=left] {$\displaystyle 1$};
\draw (328,36) node [anchor=north west][inner sep=0.75pt]   [align=left] {$\displaystyle 2$};
\draw (342,66) node [anchor=north west][inner sep=0.75pt]   [align=left] {$\displaystyle 3$};
\draw (328.5,98.65) node [anchor=north west][inner sep=0.75pt]   [align=left] {$\displaystyle 4$};
\draw (292,98.65) node [anchor=north west][inner sep=0.75pt]   [align=left] {$\displaystyle 5$};
\draw (275,66) node [anchor=north west][inner sep=0.75pt]   [align=left] {$\displaystyle 6$};
\draw (299,123) node [anchor=north west][inner sep=0.75pt]   [align=left] {$\displaystyle \Ind(R_3)$};
\end{tikzpicture}
\caption{A simplicial sphere that is not spherical (\cref{ex:ex-non-ex})}\label{f:sns}
\end{figure}
\end{enumerate}

\end{example}

Our main topological tool is the following well-known link-deletion Mayer-Vietoris sequence. For a proof on how to get the long exact sequence in \cref{t:mvseq} see~\cite[Theorem 2.2]{linkdel}.

\begin{theorem}[\textbf{Link-deletion Mayer-Vietoris sequence}]\label{t:mvseq}
    Let $\Delta$ be a simplicial complex and $v$ a vertex of $\Delta$. Then we have the following long exact sequence:
    $$
        \cdots \rightarrow \tilde H_{i}(\lk_\Delta( v)) \rightarrow
            \tilde H_{i}(\del_\Delta(v)) \rightarrow
        \tilde H_{i}(\Delta) \rightarrow \tilde H_{i-1}(\lk_\Delta( v)) \rightarrow \cdots
    $$
    Moreover, the same long exact sequence holds if the coefficients of the homology groups are taken to be a field $\kk$.
\end{theorem}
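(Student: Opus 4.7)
The plan is to apply the classical simplicial Mayer--Vietoris long exact sequence to the decomposition $\Delta = \del_\Delta(v) \cup \sta_\Delta(v)$. This union is immediate from the definitions: every face of $\Delta$ either avoids $v$ and lies in $\del_\Delta(v)$, or contains $v$ and then satisfies $\sigma \cup \{v\} = \sigma \in \Delta$, placing it in $\sta_\Delta(v)$. Both pieces are subcomplexes of $\Delta$, so the pair is valid input for Mayer--Vietoris.

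Next I would identify the intersection. A face $\sigma$ lies in $\del_\Delta(v) \cap \sta_\Delta(v)$ exactly when $v \notin \sigma$ and $\sigma \cup \{v\} \in \Delta$, which is the defining condition for a face of $\lk_\Delta(v)$. Feeding these two identifications into the reduced simplicial Mayer--Vietoris sequence yields
$$\cdots \to \tilde H_i(\lk_\Delta(v)) \to \tilde H_i(\del_\Delta(v)) \oplus \tilde H_i(\sta_\Delta(v)) \to \tilde H_i(\Delta) \to \tilde H_{i-1}(\lk_\Delta(v)) \to \cdots$$

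The key simplification is that $\sta_\Delta(v)$ is a cone over $v$: by definition every face $\tau$ of $\sta_\Delta(v)$ satisfies $\tau \cup \{v\} \in \Delta$, and $\tau \cup \{v\}$ itself belongs to $\sta_\Delta(v)$ (since $(\tau \cup \{v\}) \cup \{v\} = \tau \cup \{v\} \in \Delta$). Thus $\sta_\Delta(v)$ is contractible, $\tilde H_i(\sta_\Delta(v)) = 0$ for all $i$, and substituting this into the sequence collapses the middle term to $\tilde H_i(\del_\Delta(v))$, giving exactly the stated exact sequence. The $\kk$-coefficient version follows by rerunning the same argument with the simplicial chain complex over $\kk$, or equivalently by tensoring the $\Z$-version with $\kk$, which preserves exactness since $\kk$ is flat over $\Z$.

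The only genuinely verifiable step is the set-theoretic bookkeeping that pins down the union and intersection as $\Delta$ and $\lk_\Delta(v)$ respectively; after that the proof is a direct appeal to a standard tool and I do not anticipate a real obstacle. If one wished to avoid quoting the simplicial Mayer--Vietoris sequence, the alternative is to build a short exact sequence of chain complexes $0 \to C_\bullet(\lk_\Delta(v)) \to C_\bullet(\del_\Delta(v)) \oplus C_\bullet(\sta_\Delta(v)) \to C_\bullet(\Delta) \to 0$ directly from the decomposition above and take its long exact sequence in homology, which is the route implicit in the reference cited.
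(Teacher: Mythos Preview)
Your proof is correct and is the standard argument: decompose $\Delta$ as $\del_\Delta(v)\cup\sta_\Delta(v)$, identify the intersection as $\lk_\Delta(v)$, and use that the star is a cone to kill its homology in the Mayer--Vietoris sequence. The paper does not supply its own proof of this statement; it simply cites \cite[Theorem~2.2]{linkdel}, and your argument is exactly the one that reference records.
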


\begin{lemma}\label{l:diagramchasing}
    Let $\Delta$ be a $\kk$-spherical complex and $v$ a vertex. Then at least one of the following holds 
    \begin{enumerate}
        \item $\tilde H_j(\Delta; \kk) \cong \tilde H_j(\del_\Delta(v); \kk)$ and $\tilde H_j(\lk_\Delta(v); \kk) = 0$ for every $j$,
        \item $\tilde H_j(\Delta; \kk) \cong \tilde H_{j - 1}(\lk_\Delta(v); \kk)$ and $\tilde H_j(\del_\Delta(v); \kk) = 0$ for every $j$ or
        \item $\tilde H_j(\lk_\Delta(v); \kk) \cong \tilde H_j(\del_\Delta(v); \kk)$ and $\tilde H_j(\Delta; \kk) = 0$ for every $j$
    \end{enumerate}
\end{lemma}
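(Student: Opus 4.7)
The plan is to apply the link-deletion Mayer--Vietoris sequence of \cref{t:mvseq} together with the defining property that each of $\Delta$, $\lk_\Delta(v)$, and $\del_\Delta(v)$ has reduced homology concentrated in at most one degree and equal to $\kk$ there.

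I first observe that each of the three conclusions (1), (2), (3) corresponds to one of $\lk_\Delta(v)$, $\del_\Delta(v)$, or $\Delta$ being acyclic. Indeed, in each of these three situations the relevant map in \cref{t:mvseq} is flanked by vanishing terms and is therefore an isomorphism (in case (1) this map is $\tilde H_j(\del_\Delta(v);\kk) \to \tilde H_j(\Delta;\kk)$, in case (2) it is the connecting map $\tilde H_j(\Delta;\kk) \to \tilde H_{j-1}(\lk_\Delta(v);\kk)$, and in case (3) it is $\tilde H_j(\lk_\Delta(v);\kk) \to \tilde H_j(\del_\Delta(v);\kk)$). So the heart of the lemma is to show that at least one of the three complexes is acyclic.

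Suppose for contradiction that $\tilde H_d(\Delta;\kk)$, $\tilde H_\ell(\lk_\Delta(v);\kk)$, and $\tilde H_e(\del_\Delta(v);\kk)$ are all isomorphic to $\kk$, with all other reduced homology vanishing. I split on whether $\ell = e$. If $\ell \neq e$, then the map $\tilde H_j(\lk_\Delta(v);\kk) \to \tilde H_j(\del_\Delta(v);\kk)$ is zero for every $j$ (one end always vanishes), and the Mayer--Vietoris sequence decomposes into short exact sequences
\[
0 \to \tilde H_j(\del_\Delta(v);\kk) \to \tilde H_j(\Delta;\kk) \to \tilde H_{j-1}(\lk_\Delta(v);\kk) \to 0.
\]
A dimension count using the concentration of $\tilde H_\bullet(\Delta;\kk)$ at degree $d$ then forces one of $\lk_\Delta(v)$, $\del_\Delta(v)$ to be acyclic, contradicting the assumption.

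If instead $\ell = e =: m$, I split further on whether the map $\phi \colon \tilde H_m(\lk_\Delta(v);\kk) \to \tilde H_m(\del_\Delta(v);\kk)$ is zero or an isomorphism of one-dimensional spaces. In the first case the same short exact sequence decomposition applies and forces both $\tilde H_m(\Delta;\kk)$ and $\tilde H_{m+1}(\Delta;\kk)$ to be $\kk$, breaking concentration. In the second case, tracing exactness of \cref{t:mvseq} in a small window around degrees $m$ and $m+1$ shows that every $\tilde H_j(\Delta;\kk)$ vanishes, contradicting non-acyclicity of $\Delta$. The main obstacle is this last subcase: one must carefully track how the rank of $\phi$ constrains the connecting homomorphism $\tilde H_{m+1}(\Delta;\kk) \to \tilde H_m(\lk_\Delta(v);\kk)$ and hence the homology of $\Delta$, but once the relevant portion of the Mayer--Vietoris sequence is written out, the contradictions follow quickly.
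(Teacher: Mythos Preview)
Your argument is correct and uses the same core tool as the paper, namely the link--deletion Mayer--Vietoris sequence of \cref{t:mvseq} combined with a dimension count over $\kk$. The only difference is organizational: the paper first splits on whether $\Delta$ is acyclic and, when it is not, localizes the analysis to the five-term piece of the sequence around the degree $k$ with $\tilde H_k(\Delta;\kk)\cong\kk$, whereas you assume all three complexes are non-acyclic and do a case analysis on the relationship between the link and deletion homology degrees and the rank of the map $\phi$; both routes reach the same contradictions.
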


\begin{proof}
    By the definition of spherical complex, we have two possible cases. If $\Delta$ is acyclic, then~\cref{t:mvseq} implies 
        \begin{equation}\label{e:mvj}
         \tilde H_j(\lk_\Delta(v); \kk) \cong \tilde H_j(\del_\Delta(v); \kk)    
        \end{equation}
        for every $j$, which means that~(3) holds.
        
       Suppose  $\Delta$ is not acyclic, and assume without loss of generality that $\Delta$ has the homology of a $k$-dimensional sphere. In other words, 
       $$\tilde H_j(\Delta; \kk) \neq 0 \iff j = k, 
       \qand 
       \tilde H_k(\Delta; \kk) = \kk.$$
             \cref{t:mvseq} gives us the following exact sequence 
   \begin{equation}\label{e:small-mv}
   {\small
     \begin{array}{rl}
        0
        \rightarrow
        \tilde H_{k}(\lk_{\Delta}(v); \kk) 
        \rightarrow
       \tilde H_{k}(\del_{\Delta}(v); \kk) 
 \rightarrow
        &\kk\\
        \rightarrow&
        \tilde H_{k-1}(\lk_{\Delta}(v); \kk)
         
         \rightarrow 
        \tilde H_{k-1}(\del_{\Delta}(v); \kk) 
        \rightarrow 
         0 
         \end{array}}
     \end{equation}    
     and also \eqref{e:mvj} for all $j \notin \{k,k-1\}$. Since both $\lk_\Delta(v)$ and $\del_\Delta(v)$ are either acyclic, or have the homology of a sphere,     
    and  the alternating sum of dimensions must add up to zero, we may simplify the sequence above to
    $$ \cdots \rightarrow 
        0 
        \rightarrow
        \tilde H_{k}(\del_{\Delta}(v); \kk) 
        \rightarrow
        \kk 
        \rightarrow 
        \tilde H_{k-1}(\lk_{\Delta}(v); \kk) 
        \rightarrow
        0 
        \rightarrow
        \cdots
       $$
    which implies exactly one of (1) or (2) holds.

\end{proof}

\section{Filtrations of spherical complexes}\label{s:filtrations}\label{sec-filtration}
We are now ready to set up the required tools to find the  homology of induced subcomplexes and links of spherical complexes. Some of the notation we use  previously appeared  in the literature for studying independence complexes of ternary graphs~\cite{kim,wuzhang}.

\begin{definition}[$\Delta\lr{X}{Y}$]
\label{d:xy} Given a sequence of vertices $v_1, \dots, v_n$ of a simplicial complex $\Gamma$ partitioned into disjoint sets $X$ and $Y$, and a subcomplex $\Delta$ of $\Gamma$, we define the following sequence of subcomplexes.
\begin{itemize}
\item $\Delta_1=\Delta$; 
\item if $i>1$, then  
$$ \Delta_{i+1}=
\begin{cases}
 \lk_{\Delta_i}(v_i) & \qif v_i \in X \\
 \del_{\Delta_i}(v_i) & \qif v_i \in Y. 
\end{cases}
$$  
\end{itemize}

    We then denote $\Delta_{n + 1}$ by $\Delta\lr{X}{Y}$ and similarly if 
    $$
        X_i = X \cap \{v_1, \dots, v_i\} \qand Y_i = Y \cap \{v_1, \dots, v_i\},
    $$
    then we denote $\Delta_{i + 1}$ by $\Delta\lr{X_i}{Y_i}$. In particular, 
    $\Delta\lr{\emptyset}{\emptyset}=\Delta$, and for a vertex $v$ of $\Gamma$ we have 
    $$
    \Delta \lr{\emptyset}{v} = \del_\Delta(v) 
    \qand 
    \Delta \lr{v}{\emptyset} =  \lk_\Delta(v).
    $$
\end{definition}

We may then define the following function analogous to the one in Kim's work~\cite{kim}.

\begin{definition}[$d\lr{X}{Y}$]\label{d:d}
    Let $\Delta$ be  a $\kk$-spherical complex, and $X,Y$ as in \cref{d:xy}. We set
        $$
        d\lr{X}{Y} = \begin{cases}
            k \qif \tilde H_k(\Delta\lr{X}{Y}; \kk) \cong \kk \qforsome k\\
            \ast \qotherwise
        \end{cases}
        $$
  where $\ast$ is just a symbol.      
 \end{definition}

We now define filtrations, these are the main objects we use in order to prove~\cref{t:topbettiintro}.

\begin{definition}[{\bf Filtrations}]\label{d:filtration}
    Let $\Delta$ be a $\kk$-spherical complex with the homology of a $k$-dimensional sphere, or equivalently, $d\lr{\emptyset}{\emptyset} = k \neq \ast$. A \textbf{filtration} $\F$ of $\Delta$ is a sequence of subcomplexes
    $$
        \F := \Delta_1 \supseteq  \Delta_2 \supseteq  \dots \supseteq  \Delta_q
    $$
    with a sequence of vertices $v_1 \in \Delta_1, \dots, v_{q - 1} \in \Delta_{q - 1}$ such that
    \begin{enumerate}
        \item $\Delta_1 = \Delta$;
        \item $\Delta_{i + 1} \in \{\del_{\Delta_i}(v_i), \lk_{\Delta_i}(v_i)\}$ for every $i$;
        \item $\Delta_{i + 1}$ has the homology of a sphere for every $i$ when the coefficients are taken to be $\kk$.
    \end{enumerate}
    We say a filtration is \textbf{maximal} if $\Delta_q = \emptyset$.

    Since each $\Delta_i$ is not acyclic, by \cref{l:diagramchasing} for every $i$, there is $t_i \in \{0, 1\}$ such that
    $$
        \tilde H_j(\Delta_i; \kk) \cong \tilde H_{j - t_i}(\Delta_{i + 1}; \kk) \quad \mbox{ for all }j,
\qwhere 
    \Delta_{i + 1}  = 
    \begin{cases}
        \del_{\Delta_i}(v_i) & \qif t_i=0\\
        \lk_{\Delta_i}(v_i) & \qif t_i=1.
    \end{cases}
    $$
\end{definition}

A consequence of \cref{t:mvseq,l:diagramchasing} is the following.

\begin{proposition}[\textbf{Existence of filtrations}]\label{p:filtrationsexist}
    Let $\Delta$ be a $\kk$-spherical complex. If $\Delta$ is not acyclic, then it has a maximal filtration.
\end{proposition}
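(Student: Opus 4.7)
The plan is to proceed by induction on the number of vertices $n$ of $\Delta$. For the base case $n=0$, a simplicial complex with no vertices is either the void complex $\{\}$ (which is acyclic) or $\emptyset = \{\emptyset\}$ (which has reduced homology $\kk$ concentrated in degree $-1$, i.e.\ the homology of a $(-1)$-sphere). Since $\Delta$ is non-acyclic by assumption, it must be the latter, and the singleton sequence $\F := \Delta_1 = \emptyset$ (with $q = 1$) is already a maximal filtration.

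For the inductive step with $n \geq 1$, I would pick any vertex $v_1 \in \Delta$ and invoke \cref{l:diagramchasing}. Since $\Delta$ is not acyclic, case~(3) of that lemma is ruled out, so exactly one of $\del_\Delta(v_1)$ or $\lk_\Delta(v_1)$ inherits the non-trivial sphere-type homology of $\Delta$ (possibly with a degree shift in the link case), while the other becomes acyclic. Define
$$
\Delta_2 =
\begin{cases}
\del_\Delta(v_1) & \mbox{in case (1)},\\
\lk_\Delta(v_1) & \mbox{in case (2)}.
\end{cases}
$$
By the second condition in the definition of $\kk$-spherical complex, $\Delta_2$ is again $\kk$-spherical; it has strictly fewer vertices than $\Delta$; and it is non-acyclic with the homology of a sphere. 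Applying the inductive hypothesis to $\Delta_2$ produces a maximal filtration $\Delta_2 \supseteq \Delta_3 \supseteq \dots \supseteq \Delta_q = \emptyset$, which I then prepend with $\Delta_1 = \Delta$ and the chosen vertex $v_1$ to obtain a maximal filtration of $\Delta$ satisfying all three conditions of \cref{d:filtration}.

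The argument is essentially bookkeeping once \cref{l:diagramchasing} is available; that lemma does all the topological work by guaranteeing the existence of a non-acyclic ``branch'' at each step. The only point worth flagging is the termination: since the vertex count drops by at least one at each step, the recursion must end at a vertexless complex, and the non-acyclicity condition forces this terminal complex to be $\emptyset$ rather than the void complex. I do not foresee a real obstacle beyond this observation.
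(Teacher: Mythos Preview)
Your proposal is correct and follows essentially the same approach as the paper: both arguments pick a vertex, invoke \cref{l:diagramchasing} to see that exactly one of the link or deletion remains non-acyclic (since case~(3) is excluded when $\Delta$ is not acyclic), set $\Delta_2$ to be that branch, and iterate. The only difference is cosmetic: you phrase it as a formal induction on the number of vertices with an explicit base case at $\emptyset$, while the paper describes the same recursion informally and observes termination directly.
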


\begin{proof}
    Let $v_1 \in \Delta = \Delta_1$, and assume $\Delta$ has the homology of a $k$-dimensional sphere. By \cref{t:mvseq} we have the  exact sequence in \eqref{e:small-mv}.
    Moreover, by the definition of spherical complex, both $\lk_\Delta( v_1)$ and $\del_\Delta(v_1)$ either have trivial homology, or have the homology of a sphere. In particular, by~\cref{l:diagramchasing} exactly one of the following holds:
    \begin{enumerate}
        \item $\lk_\Delta( v)$ has the homology of $S^{k - 1}$ and $\del_\Delta(v)$ has trivial homology.
        \item $\del_\Delta(v)$ has the homology of $S^k$ and $\lk_\Delta( v)$ has trivial homology.
    \end{enumerate}
    Note that taking $\Delta_2$ to be the complex above without trivial homology gives us the first step in a filtration. Inductively, since every simplicial complex obtained by taking links and deletions of $\Delta$ is also spherical by definition, we can repeat the process by choosing a new vertex $v_2 \in \Delta_2$. After a finite number of steps, the complex $\Delta_q$ will have only $\emptyset$ as a face, and in this case $\Delta_q = S^{-1}$, so the result holds.
\end{proof}

\cref{p:filtrationsexist} also implies any sequence of vertices $v_1, \dots, v_s$ such that $\{v_1, \dots, v_s\} \in \Delta$ can be used to build a filtration.

\begin{lemma}[{\bf Analogue of Lemma 3.2~ \cite{kim}}]\label{triangles}
    Let $\Delta$ be a spherical complex and $X, Y \subseteq V(\Delta)$ such that $X \cap Y = \emptyset$. For every vertex $v \not \in X \cup Y$, the triple 
    $$
    \big ( d\lr{X}{Y}, d\lr{X \cup \{v\}}{Y}, d\lr{X}{Y \cup \{v\}} \big )
    \in \big \{    (\ast, \ast, \ast), (k, \ast, k), (k + 1, k, \ast), (\ast, k, k) \big \}
    $$
    for some integer $k \geq -1$.
\end{lemma}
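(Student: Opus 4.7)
The plan is to set $\Sigma = \Delta\lr{X}{Y}$ and reduce the lemma to Lemma~\ref{l:diagramchasing} applied to $\Sigma$ at the vertex $v$. First I would observe that link and deletion commute with each other (and with themselves) on disjoint vertices, so the complex $\Delta\lr{X}{Y}$ is independent of the chosen ordering of the vertices in $X\cup Y$; in particular
$$\Delta\lr{X\cup\{v\}}{Y}=\lk_\Sigma(v) \qand \Delta\lr{X}{Y\cup\{v\}}=\del_\Sigma(v).$$
By definition of a spherical complex (closure under links and deletions), $\Sigma$ is itself spherical, so Lemma~\ref{l:diagramchasing} applies to the triple $(\Sigma,\lk_\Sigma(v),\del_\Sigma(v))$.

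Next I would run through the three mutually possible conclusions of Lemma~\ref{l:diagramchasing} and translate each one into a statement about the function $d\lr{-}{-}$ from Definition~\ref{d:d}.
\begin{enumerate}
\item If $\tilde H_j(\Sigma;\kk)\cong\tilde H_j(\del_\Sigma(v);\kk)$ for all $j$ and $\lk_\Sigma(v)$ is acyclic, then either both $\Sigma$ and $\del_\Sigma(v)$ have the homology of a common sphere $S^k$ (giving the triple $(k,\ast,k)$) or both are acyclic (giving $(\ast,\ast,\ast)$).
\item If $\tilde H_j(\Sigma;\kk)\cong\tilde H_{j-1}(\lk_\Sigma(v);\kk)$ for all $j$ and $\del_\Sigma(v)$ is acyclic, then either $\Sigma$ has the homology of $S^{k+1}$ while $\lk_\Sigma(v)$ has that of $S^k$ (triple $(k+1,k,\ast)$) or everything is acyclic (triple $(\ast,\ast,\ast)$).
\item If $\tilde H_j(\lk_\Sigma(v);\kk)\cong\tilde H_j(\del_\Sigma(v);\kk)$ for all $j$ and $\Sigma$ is acyclic, then either both $\lk_\Sigma(v)$ and $\del_\Sigma(v)$ have the homology of the same $S^k$ (triple $(\ast,k,k)$) or all are acyclic (triple $(\ast,\ast,\ast)$).
\end{enumerate}
In every case the triple lies in $\{(\ast,\ast,\ast),(k,\ast,k),(k+1,k,\ast),(\ast,k,k)\}$, which is what is asserted. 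The edge case $k=-1$ arises exactly when a subcomplex equals $\emptyset=S^{-1}$, and is automatic from the definition of $d\lr{-}{-}$.

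The only conceptual obstacle is the well-definedness issue in the first paragraph: the notation $\Delta\lr{X\cup\{v\}}{Y}$ must be interpreted so that the operations at vertices of $X$, $Y$, and $v$ can be carried out in any order without changing the resulting complex. This is a short check, but it is what allows the lemma to be stated purely in terms of the sets $X,Y$ and vertex $v$. After that the result is essentially a repackaging of the trichotomy in Lemma~\ref{l:diagramchasing}.
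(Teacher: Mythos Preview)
Your approach is essentially the paper's: set $\Sigma=\Delta\lr{X}{Y}$, identify $\Delta\lr{X\cup\{v\}}{Y}=\lk_\Sigma(v)$ and $\Delta\lr{X}{Y\cup\{v\}}=\del_\Sigma(v)$, and invoke Lemma~\ref{l:diagramchasing}. There is one case you overlook, however, and the paper singles it out explicitly. Even though $v\in V(\Delta)\setminus(X\cup Y)$, it may happen that $v\notin V(\Sigma)$, since taking links along the vertices in $X$ can delete $v$. In that situation the paper's conventions give $\lk_\Sigma(v)=\{\}$ (the void complex, with $d=\ast$) and $\del_\Sigma(v)=\Sigma$, so the triple is $(d\lr{X}{Y},\ast,d\lr{X}{Y})\in\{(\ast,\ast,\ast),(k,\ast,k)\}$; but Lemma~\ref{l:diagramchasing} is stated for $v$ a vertex of the complex (its proof uses the Mayer--Vietoris sequence of Theorem~\ref{t:mvseq}), so your direct appeal to it does not cover this case. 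Your remark about the ``edge case $k=-1$'' concerns the empty complex $\emptyset=S^{-1}$, which is a different object from the void complex $\{\}$. Once you insert this short case distinction your argument coincides with the paper's.
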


\begin{proof}
  We split the proof in two cases.

  \begin{enumerate}
      \item If $v \not \in \Delta \lr{X}{Y}$, then $\Delta \lr{X \cup \{v\}}{Y} = \{\}$ and $\Delta \lr{X}{Y \cup \{v\}} = \Delta \lr{X}{Y}$, hence 
      $$
      \big ( d\lr{X}{Y}, d\lr{X \cup \{v\}}{Y}, d\lr{X}{Y \cup \{v\}} \big ) \in \{(\ast, \ast, \ast), (k, \ast, k)\}
      $$
      \item If $v \in \Delta \lr{X}{Y}$, the result follows by~\cref{l:diagramchasing}
  \end{enumerate}
  
\end{proof}

We are now ready to prove our main result.

\begin{theorem}[{\bf Main Theorem}]\label{t:topbetti}
    Let $\Delta$ be a spherical complex with the homology of a sphere of dimension $d_\Delta$. Let $\Sigma$ be a subcomplex of $\Delta$ such that $\Sigma = \Delta\lr{X}{Y}$, where $X, Y$ are disjoint subsets of the vertex set of $\Delta$. Then $\Sigma$ either has no homology or has the homology of a sphere of dimension $d_\Sigma$, where
    $$
        d_\Delta - |X| - |Y| \leq d_\Sigma \leq d_\Delta.
    $$
\end{theorem}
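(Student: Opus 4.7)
The plan is to induct on $n = |X| + |Y|$, with the inductive hypothesis applied uniformly to every spherical complex with sphere homology. The base case $n = 0$ is immediate since $\Sigma = \Delta$. For the inductive step, pick any vertex $v \in X \cup Y$ and assume by symmetry that $v \in X$. Applying \cref{l:diagramchasing} to $\Delta$ at $v$, exactly one of $\lk_\Delta(v)$ or $\del_\Delta(v)$ has sphere homology (the other is acyclic).

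If $\lk_\Delta(v)$ has sphere homology (necessarily of dimension $d_\Delta - 1$), then $\Sigma = \lk_\Delta(v)\lr{X \setminus \{v\}}{Y}$, and the inductive hypothesis applied to $\lk_\Delta(v)$ with a configuration of size $n - 1$ immediately yields $d_\Sigma \in [d_\Delta - n,\, d_\Delta - 1]$, giving the desired bounds.

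The harder case is when $\del_\Delta(v)$ has sphere homology of dimension $d_\Delta$: the natural predecessor $\Sigma' = \Delta\lr{X \setminus \{v\}}{Y}$ may itself be acyclic, so the inductive hypothesis applied to $\Sigma'$ gives no dimension information about $\Sigma$. I would circumvent this by introducing the ``sibling'' $\Sigma^* = \Delta\lr{X \setminus \{v\}}{Y \cup \{v\}} = \del_\Delta(v)\lr{X \setminus \{v\}}{Y}$, to which the inductive hypothesis applied to the smaller spherical complex $\del_\Delta(v)$ gives $d^* \in [d_\Delta - (n-1),\, d_\Delta]$ whenever $\Sigma^*$ is a sphere. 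Now apply \cref{triangles} to $\Sigma'$ at $v$: the triple $(d_{\Sigma'},\, d_\Sigma,\, d^*)$ is one of the four allowed types, and if $\Sigma$ is a sphere of dimension $k$ it must be $(k+1, k, *)$ or $(*, k, k)$.

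In the subcase $(k+1, k, *)$, $\Sigma'$ is a sphere of dimension $k + 1$, and the inductive hypothesis applied to $\Delta$ itself with the strictly smaller configuration $(X \setminus \{v\}, Y)$ of size $n - 1$ bounds $k + 1 \in [d_\Delta - (n-1), d_\Delta]$, hence $k \in [d_\Delta - n, d_\Delta - 1]$. In the subcase $(*, k, k)$, the sibling $\Sigma^*$ is a sphere of the same dimension $k$ as $\Sigma$, and the bound on $d^*$ transfers to $k$. In every case $d_\Sigma \in [d_\Delta - n, d_\Delta]$, as desired. The chief obstacle is precisely the $(*, k, k)$ branch, where the direct predecessor is acyclic and kills the standard inductive step; the sibling pivot into the smaller spherical complex $\del_\Delta(v)$ is exactly what restores inductive control.
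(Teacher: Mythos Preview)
Your proof is correct and takes a genuinely different route from the paper's. The key structural difference is the scope of the induction hypothesis: you induct on $n = |X| + |Y|$ \emph{uniformly over all spherical complexes with sphere homology}, whereas the paper fixes $\Delta$ and inducts on $|A| + |B|$ within that fixed complex. This is what makes your ``sibling pivot'' possible: when the direct predecessor $\Sigma'$ is acyclic, you pass to the smaller spherical complex $\del_\Delta(v)$ (or $\lk_\Delta(v)$), which by \cref{l:diagramchasing} has sphere homology of dimension $d_\Delta$ (resp.\ $d_\Delta - 1$), and invoke the hypothesis there with a configuration of size $n-1$. The paper, having fixed $\Delta$, cannot do this; instead it must examine \emph{every} vertex of $A \cup B$, argue that if the inductive step fails for all of them then $d\lr{C}{D} = d_\Sigma$ for every partition $C \sqcup D = A \cup B$ while every proper sub-configuration is acyclic, and then derive a contradiction from the existence of a filtration of $\Delta$ (\cref{p:filtrationsexist}). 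Your argument is shorter and avoids this contradiction step entirely; the paper's argument stays inside $\Delta$ and extracts slightly more structural information about what happens when the easy cases fail. One small point worth making explicit in your write-up is that link and deletion commute, so that $\Delta\lr{X}{Y}$ is independent of the ordering and the identities $\Sigma = \lk_\Delta(v)\lr{X \setminus \{v\}}{Y}$ and $\Sigma^* = \del_\Delta(v)\lr{X \setminus \{v\}}{Y}$ are legitimate; the paper's \cref{d:xy} fixes an ordering.
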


\begin{proof}
    Fix $\Sigma = \Delta\lr{A}{B}$ a subcomplex of $\Delta$. Since $\Delta$ is a spherical complex, we know $\Sigma$ either has no homology, or has the homology of a sphere of dimension $d_\Sigma$. If $\Sigma$ has no homology, we have nothing to show.  In the latter case, we proceed by induction on $n = |A| + |B|$ to show 
    $$
        d_\Delta - |A| - |B| \leq d_\Sigma \leq d_\Delta.
    $$
    If $n = 1$, then either $A = \{v\}$ and $B = \emptyset$ or $A = \emptyset$ and $B = \{v\}$ for some $v \in \Delta$. The result follows directly by \cref{triangles} since
    $$
        \big{(}d\lr{X}{Y}, d\lr{X \cup \{v\}}{Y}, d\lr{X}{Y \cup \{v\}}\big{)} \in \big{\{}(d_\Delta,\ast,d_\Delta), (d_\Delta, d_\Delta - 1, \ast)\big{\}}. 
    $$
    Now suppose $n > 1$ and the statement holds for every subcomplex $\Delta\lr{A'}{B'}$ with $|A'| + |B'| \leq n$. Fix two subsets $A = \{v_1, \dots, v_q\}$, $B = \{v_{q + 1}, \dots, v_{n + 1}\}$ such that $A \cap B = \emptyset$, and let $d_\Sigma = d\lr{A}{B} \neq \ast$. By \cref{triangles}, for every $v_i \in A$ we have
    $$
        (d\lr{A \setminus \{v_i\}}{B}, d\lr{A}{B}, d\lr{A \setminus \{v_i\}}{B \cup \{v_i\}}) \in \{(d_\Sigma + 1, d_\Sigma, \ast), (\ast, d_\Sigma, d_\Sigma)\}.
    $$
    If the triple is $(d_\Sigma + 1, d_\Sigma, \ast$), then by the induction hypothesis applied to the complex $\Delta\lr{A \setminus \{v_i\}}{B}$ we have 
    $$
        d_\Delta - n \leq d_\Sigma + 1 \leq d_\Delta \implies d_\Delta - (n + 1) \leq d_\Sigma \leq d_\Delta
    $$
    and we are done.

    Similarly, for every $v_i \in B$, by \cref{triangles} we have
    $$
        (d\lr{A}{B \setminus \{v_i\}}, d\lr{A \cup \{v_i\}}{B \setminus \{v_i\}}, d\lr{A}{B}) \in \{(d_\Sigma, \ast, d_\Sigma), (\ast, d_\Sigma, d_\Sigma)\}
    $$
    If the triple is $(d_\Sigma, \ast, d_\Sigma)$, by the induction hypothesis applied to the complex $\Delta\lr{A}{B \setminus \{v_i\}}$ we have 
    $$
        d_\Delta - n \leq d_\Sigma \leq d_\Delta \implies d_\Delta - (n + 1) \leq d_\Sigma \leq d_\Delta
    $$
    and we are done.
    
    In summary, the only way for the induction step to fail, is if for every $v_i \in A$ and $v_j \in B$
    \begin{align*}\label{e:ab}
        (d\lr{A \setminus \{v_i\}}{B}, d\lr{A}{B}, d\lr{A \setminus \{v_i\}}{B \cup \{v_j\}}) & = (\ast, d_\Sigma, d_\Sigma) \\
        (d\lr{A}{B \setminus \{v_j\}}, d\lr{A \cup \{v_j\}}{B \setminus \{v_j\}}, d\lr{A}{B}) & = (\ast, d_\Sigma, d_\Sigma)
    \end{align*}    
    but if that is the case, then for every $C, D \subseteq A \cup B$ with $C \cup D = A \cup B$ and $C \cap D = \emptyset$ we have 
    $$
        d\lr{C}{D} = d\lr{A}{B} = d_\Sigma \neq \ast.
    $$
    Moreover, by picking $u \in C$ (or $v \in D$), by setting $X = C \setminus \{u\}$ and $Y = D$ (or $X = C$ and $Y = D \setminus \{v\})$ in \cref{triangles} we can conclude that
    $$
        d\lr{C \setminus \{u\}}{D} = d\lr{C}{D \setminus \{v\}} = \ast.
    $$
    Now since $\Delta$ is not acyclic, we can apply the same argument from~\cref{p:filtrationsexist} to show there is a filtration 
    $$
        \F: \Delta = \Delta_1 \supseteq \Delta_2 \supseteq \dots \supseteq \Delta_n \supseteq \Delta_{n + 1}
    $$
    of $\Delta$ using the order of the vertices $v_1, \dots, v_{n + 1}$ of $A \cup B$. Note that this is possible since $d\lr{\emptyset}{\emptyset} \neq \ast$.

    At the last step of the filtration we have a contradiction because for some $C', D'$ with $C' \cup D' = (A \cup B) \setminus \{v_{n + 1}\}$ and $C' \cap D' = \emptyset$, we have $d\lr{C'}{D'} \neq \ast$ and $\Delta_n = \Delta\lr{C'}{D'}$. 
\end{proof}

\begin{definition}
    The {\bf $\kk$-Leray number} of a simplicial complex $\Delta$ is defined as 
    $$
    L_\kk(\Delta) = \min \{d \st \tilde H_d(\Sigma; \kk) = 0  \quad \text{for all induced subcomplexes } \Sigma \subseteq \Delta\}.
    $$
    In other words, $L_\kk(\Delta)$ is the minimum $d$ such that every induced subcomplex $\Sigma$ of $\Delta$ (including $\Delta$ itself) has trivial $d$-homology.
\end{definition}

A direct consequence of~\cref{t:topbetti} is the following.

\begin{corollary}\label{c:lerayspherical}
    Let $\Delta$ be a $\kk$-spherical complex with $\tilde H_d(\Delta; \kk) \cong \kk$. Then
    $$
        L_\kk(\Delta) = d + 1.
    $$
\end{corollary}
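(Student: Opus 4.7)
The plan is to derive both inequalities $L_\kk(\Delta) \leq d+1$ and $L_\kk(\Delta) \geq d+1$ as direct corollaries of the main theorem, so no new machinery is needed here.

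For the upper bound, I would observe that any induced subcomplex $\Sigma \subseteq \Delta$ on the vertex set $V(\Delta) \setminus Y$ can be written as $\Sigma = \Delta\lr{\emptyset}{Y}$ in the notation of \cref{d:xy}, since deleting each vertex in $Y$ one at a time from $\Delta$ yields exactly the induced subcomplex on the complementary vertex set. Applying \cref{t:topbetti} with $X = \emptyset$, either $\Sigma$ is acyclic, or $\Sigma$ has the homology of a sphere of some dimension $d_\Sigma \leq d_\Delta = d$. In either case, $\tilde H_i(\Sigma; \kk) = 0$ for all $i \geq d+1$, and in particular $\tilde H_{d+1}(\Sigma; \kk) = 0$. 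Hence $d+1$ lies in the set defining $L_\kk(\Delta)$, giving $L_\kk(\Delta) \leq d+1$.

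For the lower bound, I would use that $\Delta$ is an induced subcomplex of itself, and by hypothesis $\tilde H_d(\Delta; \kk) \cong \kk \neq 0$. Thus $d$ fails to satisfy the defining condition of $L_\kk(\Delta)$, so the minimum must be at least $d+1$. Combining the two inequalities yields the corollary.

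There is no substantive obstacle: the whole point is that \cref{t:topbetti} bounds the top nonzero homology of any complex reachable by links and deletions (and in particular any induced subcomplex, which is reached using only the $Y$ side of the bookkeeping), so the Leray number is pinned down exactly by the dimension of the single sphere on which $\Delta$ itself has homology. The only care needed is to keep straight that induced subcomplexes correspond to the case $X = \emptyset$ in the filtration formalism of \cref{sec-filtration}.
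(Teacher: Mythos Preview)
Your proof is correct and follows essentially the same approach as the paper: both arguments identify an induced subcomplex $\Sigma$ with $\Delta\lr{\emptyset}{Y}$ and invoke \cref{t:topbetti} to bound $d_\Sigma \leq d$, then use $\tilde H_d(\Delta;\kk)\neq 0$ for the reverse inequality. You are a bit more explicit than the paper in separating the upper and lower bounds, but the substance is identical.
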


\begin{proof}
    Let $\Sigma$ be an induced subcomplex of $\Delta$. Then 
    $$
        \Sigma = \Delta \lr{\emptyset}{V(\Delta) \setminus V(\Sigma)}.
    $$ Since $\Delta$ is spherical, we know $\Sigma$ is either acyclic, or has the homology of a $d'$-dimensional sphere. \cref{t:topbetti} then implies 
    $$
        d' \leq d,
    $$
    so the Leray number of $\Delta$ is $d + 1$.
\end{proof}

\begin{example}
    Let $\Delta$ be the following complex.
    \begin{center}

\tikzset{every picture/.style={line width=0.75pt}} 

\begin{tikzpicture}[x=0.75pt,y=0.75pt,yscale=-1,xscale=1]

\draw  [fill={rgb, 255:red, 155; green, 155; blue, 155 }  ,fill opacity=1 ] (205,103) -- (255,103) -- (255,153) -- (205,153) -- cycle ;
\draw    (205,103) -- (255,153) ;
\draw    (255,103) -- (205,153) ;
\draw    (292,103) -- (291.82,154.82) ;
\draw [shift={(291.82,154.82)}, rotate = 90.2] [color={rgb, 255:red, 0; green, 0; blue, 0 }  ][fill={rgb, 255:red, 0; green, 0; blue, 0 }  ][line width=0.75]      (0, 0) circle [x radius= 3.35, y radius= 3.35]   ;
\draw [shift={(292,103)}, rotate = 90.2] [color={rgb, 255:red, 0; green, 0; blue, 0 }  ][fill={rgb, 255:red, 0; green, 0; blue, 0 }  ][line width=0.75]      (0, 0) circle [x radius= 3.35, y radius= 3.35]   ;
\draw    (205,103) -- (205,153) ;
\draw [shift={(205,153)}, rotate = 90] [color={rgb, 255:red, 0; green, 0; blue, 0 }  ][fill={rgb, 255:red, 0; green, 0; blue, 0 }  ][line width=0.75]      (0, 0) circle [x radius= 3.35, y radius= 3.35]   ;
\draw [shift={(205,103)}, rotate = 90] [color={rgb, 255:red, 0; green, 0; blue, 0 }  ][fill={rgb, 255:red, 0; green, 0; blue, 0 }  ][line width=0.75]      (0, 0) circle [x radius= 3.35, y radius= 3.35]   ;
\draw    (255,103) -- (255,153) ;
\draw [shift={(255,153)}, rotate = 90] [color={rgb, 255:red, 0; green, 0; blue, 0 }  ][fill={rgb, 255:red, 0; green, 0; blue, 0 }  ][line width=0.75]      (0, 0) circle [x radius= 3.35, y radius= 3.35]   ;
\draw [shift={(255,103)}, rotate = 90] [color={rgb, 255:red, 0; green, 0; blue, 0 }  ][fill={rgb, 255:red, 0; green, 0; blue, 0 }  ][line width=0.75]      (0, 0) circle [x radius= 3.35, y radius= 3.35]   ;
\draw    (230,128) -- (205,153) ;
\draw [shift={(205,153)}, rotate = 135] [color={rgb, 255:red, 0; green, 0; blue, 0 }  ][fill={rgb, 255:red, 0; green, 0; blue, 0 }  ][line width=0.75]      (0, 0) circle [x radius= 3.35, y radius= 3.35]   ;
\draw [shift={(230,128)}, rotate = 135] [color={rgb, 255:red, 0; green, 0; blue, 0 }  ][fill={rgb, 255:red, 0; green, 0; blue, 0 }  ][line width=0.75]      (0, 0) circle [x radius= 3.35, y radius= 3.35]   ;

\draw (226,131) node [anchor=north west][inner sep=0.75pt]   [align=left] {$\displaystyle 1$};
\draw (257,83) node [anchor=north west][inner sep=0.75pt]   [align=left] {$\displaystyle 2$};
\draw (257,156) node [anchor=north west][inner sep=0.75pt]   [align=left] {$\displaystyle 3$};
\draw (192,156) node [anchor=north west][inner sep=0.75pt]   [align=left] {$\displaystyle 4$};
\draw (194,83) node [anchor=north west][inner sep=0.75pt]   [align=left] {$\displaystyle 5$};
\draw (294,80) node [anchor=north west][inner sep=0.75pt]   [align=left] {$\displaystyle 6$};
\draw (293.82,157.82) node [anchor=north west][inner sep=0.75pt]   [align=left] {$\displaystyle 7$};

\end{tikzpicture}
    \end{center}

    Note that the simplicial complex $\Sigma \subseteq \Delta$ induced on the set of vertices $\{2,3,4,5\}$ has $\tilde H_1(\Sigma; \kk) \cong \kk$ for any field $\kk$, while $\tilde H_0(\Delta; \kk) \cong \kk$ and $\tilde H_j(\Delta; \kk) = 0$ for every $j > 0$. It can be shown that $L_\kk(\Delta) = 2$, and in particular, by~\cref{c:lerayspherical},  $\Delta$ is not spherical. Indeed, the complex $\Gamma \subseteq \Delta$ induced on $\{3,5,6\}$ has $\tilde H_0(\Gamma; \kk) \cong \kk \oplus \kk$.
\end{example}
\section{The homology, depth and projective dimension  of spherical complexes}\label{s:homology} 

We define three invariants of spherical complexes -- sign, depth and projective dimension -- in this section. The latter two, as we will see in~\cref{s:commalg}, are equal to their algebraic counterparts, so that the names will be justified. The sign is used  to give a criterion for when  a spherical complex is acyclic in \cref{ternarytype}. 

\begin{definition}\label{d:GB}
    Let $\Delta$ be a $\kk$-spherical complex and $v_{i_1}, \dots, v_{i_s}$ an ordered subset $B$ of the vertices of $\Delta$. We define the poset $\G_B$ to be the set 
    $$
        \{\lr{C}{D} \st C \cup D = \{v_{i_1}, \dots, v_{i_j}\} \subseteq  B \qand C \cap D = \emptyset \}
    $$
    with a partial order $\leq$ given by $\lr{C'}{D'} \leq \lr{C}{D}$ if and only if $C \subseteq  C'$ and $D \subseteq D'$. We view the poset $\G_B$ by its Hasse diagram, which is a binary tree.

   Let $j_{\G_B}$ be the number of pairs $\lr{C}{D}$ in $\G_B$ such that $d\lr{C}{D} \neq \ast$ and $C\cup D=B$,
  and define  the \textbf{sign} of $\G_B$ to be $$\sign_{\G_B}= (-1)^{j_{\G_B}}.$$   

\end{definition}

\begin{lemma}\label{paritystep}
    Let $\Delta$ be a spherical complex, $B = \{v_{i_1}, \dots, v_{i_s}\}$ an ordered set of vertices of $\Delta$ and $\G_B$ be a poset as in~\cref{d:GB}. Let $f(t)$ denote the number of pairs $\lr{C}{D}$ in $\G_B$ such that $d\lr{C}{D} \neq \ast$ and $C \cup D = \{v_{i_1}, \dots, v_{i_t}\}$. Then for every $0 \leq t, t' \leq |B|$ we have $f(t)$ is even if and only if $f(t')$ is even.
\end{lemma}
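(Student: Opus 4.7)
The plan is to prove the statement by a direct induction on $t$, showing that $f(t) \equiv f(t+1) \pmod{2}$ for every $0 \le t < |B|$, and then iterating. The parity transition follows immediately from the trichotomy in \cref{triangles}.

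First I would observe that the Hasse diagram of $\G_B$ is a complete binary tree: every $\lr{C}{D}$ at level $t$ (meaning $C\cup D=\{v_{i_1},\ldots,v_{i_t}\}$) has exactly two children at level $t+1$, namely $\lr{C \cup \{v_{i_{t+1}}\}}{D}$ and $\lr{C}{D \cup \{v_{i_{t+1}}\}}$. Since these two children together with $\lr{C}{D}$ form the triple appearing in \cref{triangles} (with $v=v_{i_{t+1}}$), the triple of values
$$
\bigl(d\lr{C}{D},\ d\lr{C\cup\{v_{i_{t+1}}\}}{D},\ d\lr{C}{D\cup\{v_{i_{t+1}}\}}\bigr)
$$
must be one of $(\ast,\ast,\ast)$, $(k,\ast,k)$, $(k+1,k,\ast)$, $(\ast,k,k)$.

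The next step is to tabulate, for each of the four possible triples, how many of the two children carry a non-$\ast$ value. In the first case the count is $0$, in the second and third cases the count is $1$, and in the fourth case the count is $2$. Grouping the contributions by whether the parent is $\ast$ or not, every non-$\ast$ parent contributes exactly one non-$\ast$ child, while every $\ast$ parent contributes an even number ($0$ or $2$) of non-$\ast$ children. Summing over all nodes at level $t$, this yields
$$
f(t+1) \;=\; f(t) \;+\; 2\cdot(\text{number of $\ast$-parents of triple-type } (\ast,k,k)),
$$
so in particular $f(t+1) \equiv f(t) \pmod 2$.

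Iterating this congruence from $t=0$ to $t=|B|$ gives $f(t) \equiv f(t') \pmod 2$ for any $0 \le t, t' \le |B|$, which is exactly the claim. The only subtle point—and the step I would take most care with—is matching the convention in \cref{triangles} to the ``left child is link, right child is deletion'' convention used in $\G_B$, so that the four triples translate correctly into child-counts; everything after that is a one-line parity computation.
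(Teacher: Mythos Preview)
Your proposal is correct and is essentially the paper's own argument: both proofs apply \cref{triangles} to each node $\lr{C}{D}$ at level $t$, observe that a non-$\ast$ parent yields exactly one non-$\ast$ child while a $\ast$ parent yields $0$ or $2$, and conclude $f(t+1)\equiv f(t)\pmod 2$. Your explicit formula $f(t+1)=f(t)+2\cdot(\#\ \ast\text{-parents of type }(\ast,k,k))$ just makes the parity step slightly more explicit than the paper does.
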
 

\begin{proof}
    Assume  $\lr{C}{D} \in \G_B$ with $C \cup D =\{v_1,\ldots,v_t\}$. By \cref{triangles}, the possible values of the triple 
    \begin{equation}\label{e:cd}
    \big ( d\lr{C}{D}, \ d\lr{C \cup \{v_{t + 1}\}}{D}, \  d\lr{C}{D \cup \{v_{t + 1}\}} \big )
    \end{equation}
    are
    $$
        (\ast, \ast, \ast), \quad (k + 1, k, \ast), \quad (k, \ast, k), \quad (\ast, k, k)
    $$
    for some integer $d \geq -1$. 

     If $d\lr{C}{D} \neq \ast$, the pair $\lr{C}{D}$ contributes exactly $1$  pair $\lr{C'}{D'}$ with $d\lr{C'}{D'} \neq \ast$ and $C'\cup D'= \{v_{i_1}, \dots, v_{i_{t+1}}\}$. 
    If $d\lr{C}{D} = \ast$, the pair $\lr{C}{D}$ contributes $0$ or $2$ pairs $\lr{C'}{D'}$ with $d\lr{C'}{D'} \neq \ast$ and $C'\cup D'= \{v_{i_1}, \dots, v_{i_{t+1}}\}$.
    The conclusion now follows.
\end{proof}

 \begin{theorem}[{\bf The sign of $\Delta$}]\label{parity}
        Let $\Delta$ be a spherical complex, $B$ a set of vertices and let $\G_B$ be a poset as in \cref{d:GB}. Then 
        \begin{enumerate}
             \item $\sign_{\G_B}$ is independent of the ordering of the vertices of $B$.
            \item $\sign_{\G_B} = \sign_{\G_{B'}}$ for any other set of vertices $B'$.
            \item  $\sign_{\G_B}$ is $1$ if and only if $\Delta$ is acyclic.
            \end{enumerate}
\end{theorem}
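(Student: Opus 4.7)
The plan is to observe that all three parts collapse into a single parity computation once Lemma~\ref{paritystep} is in hand. Writing $f(t)$ for the counting function from that lemma, namely the number of pairs $\lr{C}{D}$ in $\G_B$ with $C \cup D = \{v_{i_1}, \dots, v_{i_t}\}$ and $d\lr{C}{D} \neq \ast$, we have by definition $j_{\G_B} = f(|B|)$, so $\sign_{\G_B} = (-1)^{f(|B|)}$. Since Lemma~\ref{paritystep} asserts that the parity of $f(t)$ is the same for all $0 \leq t \leq |B|$, I would slide the computation all the way down to $t=0$, where the binary tree $\G_B$ has only one node.

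The key step is then immediate: at $t = 0$ the only pair with $C \cup D = \emptyset$ is $\lr{\emptyset}{\emptyset}$, and $\Delta\lr{\emptyset}{\emptyset} = \Delta$. By \cref{d:d}, $d\lr{\emptyset}{\emptyset} \neq \ast$ precisely when $\Delta$ has the homology of some sphere, that is, precisely when $\Delta$ is not acyclic. Hence $f(0) = 1$ if $\Delta$ is non-acyclic and $f(0) = 0$ if $\Delta$ is acyclic. Combining with the parity statement,
$$
\sign_{\G_B} \;=\; (-1)^{f(|B|)} \;=\; (-1)^{f(0)},
$$
which equals $+1$ exactly when $\Delta$ is acyclic. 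This is claim~(3), and since the right-hand side depends only on $\Delta$ itself and not on the choice or ordering of $B$, claims~(1) and~(2) follow simultaneously.

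I do not expect a technical obstacle: the genuinely delicate combinatorics, namely the \cref{triangles} case analysis that controls how many children of a pair $\lr{C}{D}$ have $d \neq \ast$, is already absorbed into Lemma~\ref{paritystep}. The only point worth checking carefully is that Lemma~\ref{paritystep} is stated for an arbitrary ordered set $B$ and for every $t$ in the range $0 \leq t \leq |B|$, so that the reduction to $t = 0$ legitimately yields the ordering-independence in (1) and the $B$-independence in (2).
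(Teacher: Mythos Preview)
Your proposal is correct and matches the paper's own proof almost line for line: the paper also invokes Lemma~\ref{paritystep} to rewrite $\sign_{\G_B} = (-1)^{f(|B|)} = (-1)^{f(0)}$, identifies $f(0)$ as $1$ or $0$ according to whether $d\lr{\emptyset}{\emptyset} \neq \ast$, and reads off all three claims from the fact that $f(0)$ depends only on $\Delta$. There is nothing to add.
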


\begin{proof}
    Using (the notation in) \cref{paritystep}, if $B'$ is  any  set of vertices, with any ordering,  
       we have
        $$
        \sign_{\G_B}= (-1)^{f(|B|)} =(-1)^{f(0)} = (-1)^{f(|B'|)} = \sign_{\G_{B'}}
       $$
       which settles~(1) and~(2). Item~(3) also follows immediately, after observing that 
       $$f(0)=\begin{cases} 1 &  \qif d\lr{\emptyset}{\emptyset} \neq \ast \ \mbox{ (i.e. $\Delta$ is not acyclic)} \\
                            0 &  \qif d\lr{\emptyset}{\emptyset} = \ast \ \mbox{ (i.e. $\Delta$ is acyclic)}.
        \end{cases}
       $$ 
\end{proof}

\begin{definition}
    Let $\Delta$ be a non-acyclic spherical complex and a maximal filtration $\F$
    $$
    \Delta = \Delta_1 \supseteq \dots \supseteq \Delta_q = \emptyset
    $$
     of $\Delta$. Recalling that each step in the filtration $\F$ corresponds to a vertex deletion or taking a
    link, we will use the following notation.
     \begin{enumerate}
        \item  $\del(\F) = \{v_i \st \Delta_{i + 1} = \del_{\Delta_i}(v_i)\}$
        \item  $\lk(\F) = \{v_i \st \Delta_{i + 1} = \lk_{\Delta_i} (v_i)\}$
        \item The \textbf{depth} of $\F$ is $\dep(\F) = |\lk(\F)|$
        \item $N(\F) = {\displaystyle V(\Delta) \setminus (\bigcup_{v \in \lk(\F)}V(\sta_\Delta(v)))}$.
    \end{enumerate}
\end{definition}

\begin{theorem}[{\bf Computing homology of a spherical complex}]\label{ternarytype}
    Let $\Delta$ be a spherical complex.
    \begin{enumerate}
        \item $\Delta$ is acyclic if and only if $\sign(\Delta) = 1$. 
        \item  If $\sign(\Delta) = -1$ then   $\Delta$ has the homology of a $(\dep(\F) - 1)$--dimensional sphere, where $\F$ is any maximal filtration of $\Delta$, and in particular 
        $\dep(\F)$ is an invariant of $\Delta$.
    \end{enumerate}
\end{theorem}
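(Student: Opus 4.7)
The plan is to address both parts using results already developed. Part (1) is essentially a restatement of \cref{parity}(3): since \cref{parity}(1)--(2) justify writing $\sign(\Delta) := \sign_{\G_B}$ independently of $B$, \cref{parity}(3) immediately yields that $\sign(\Delta) = 1$ if and only if $\Delta$ is acyclic.

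For Part (2), I would fix a maximal filtration
$$
\F \colon \Delta = \Delta_1 \supseteq \Delta_2 \supseteq \cdots \supseteq \Delta_q = \emptyset,
$$
guaranteed by \cref{p:filtrationsexist}. By the definition of filtration, each $\Delta_i$ has the homology of some sphere $S^{d_i}$, with $d_1 = d_\Delta$, and $d_q = -1$ because the complex $\emptyset$ (whose only face is the empty set) satisfies $\tilde H_{-1}(\emptyset; \kk) = \kk$. The key ingredient is \cref{l:diagramchasing}: at each step of the filtration there is $t_i \in \{0,1\}$ with
$$
\tilde H_j(\Delta_i; \kk) \cong \tilde H_{j - t_i}(\Delta_{i+1}; \kk) \qforall j,
$$
and moreover $t_i = 1$ exactly when $v_i \in \lk(\F)$. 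Comparing the two sphere dimensions forces $d_i - d_{i+1} = t_i$.

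Summing telescopically then gives
$$
d_\Delta = d_1 = d_q + \sum_{i=1}^{q-1} t_i = -1 + |\lk(\F)| = \dep(\F) - 1,
$$
so $\Delta$ has the homology of a $(\dep(\F) - 1)$-dimensional sphere. Since the left-hand side depends only on $\Delta$, the quantity $\dep(\F)$ must take the same value across all maximal filtrations, which establishes the invariance claim.

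I do not anticipate a serious obstacle: the heavy lifting is already done by \cref{l:diagramchasing,p:filtrationsexist}. The only delicate point is the boundary convention $d_q = -1$ for the $(-1)$-sphere $\emptyset$, together with the observation (built into the definition of filtration) that every intermediate $\Delta_i$ is genuinely non-acyclic so that \cref{l:diagramchasing} produces a well-defined $t_i \in \{0,1\}$ at each step; once these are in place, the dimension bookkeeping is purely mechanical.
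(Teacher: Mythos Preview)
Your proposal is correct and follows essentially the same approach as the paper: Part~(1) is deferred to \cref{parity}, and Part~(2) applies \cref{l:diagramchasing} at each step of a maximal filtration to track the homology-degree shifts, then sums them telescopically to reach $d_\Delta = \dep(\F) - 1$. The paper's argument is slightly terser in its bookkeeping but identical in substance.
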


\begin{proof}
    In view of \cref{ternarytype}, we only need to prove that for a non-acyclic spherical complex $\Delta$ and a maximal filtration $\F$ of $\Delta$, $\Delta$ has the homology of a $(\dep(\F) - 1)$--dimensional sphere. Let $\F$ be a maximal filtration  of $\Delta$. By \cref{l:diagramchasing}, each time $v_i \in \del(\F)$, then
    $$
        \tilde H_j(\Delta_{i}; \kk) \cong \tilde H_j(\Delta_i; \kk).
    $$
    Again by \cref{l:diagramchasing}, when $v_i \in \lk(\F)$ we have the following isomorphisms for all $i$
    $$
        \tilde H_{j - 1}(\Delta_{i + 1}); \kk) \cong \tilde H_{j}(\Delta_{i}; \kk).
    $$
    Finally, the empty complex $\Delta_q=\Delta\lr{\lk(\F)}{\del(\F)}$ and so $\Delta_q$ has the homology of $S^{-1}$. We have
    $$
        \tilde H_{\dep(\F) - 1}(\Delta; \kk) \cong \tilde H_{-1}(\Delta\lr{\lk(\F)}{\del(\F)}; \kk) \cong \kk,
    $$
    and the result follows.
\end{proof}


Before stating the next lemma, we note that given a subcomplex $\Sigma = \Delta \lr{X}{Y} \neq \{\}$ of $\Delta$, the set $X$ must be a face of $\Delta$. Indeed, given any face $\sigma$ of $\Sigma$ (including the only $(-1)$--dimensional face of $\Sigma$), we know by the definition of link that $\sigma \cup X \in \Delta$.

\begin{lemma}\label{l:combinatorialauslander}
    Let $\Delta$ be a non-acyclic spherical complex and $\F$ a maximal filtration of $\Delta$. Then
    $$
        |V(\Delta)| = \dep(\F) + |\del(\F)| + |N(\F)| - |\del(\F) \cap N(\F)|.
    $$
\end{lemma}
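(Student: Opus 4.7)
The plan is to prove the formula by establishing the set-theoretic decomposition
\[
V(\Delta) \ = \ \lk(\F) \ \sqcup \ \big(\del(\F) \cup N(\F)\big),
\]
which yields the counting identity immediately via inclusion-exclusion: once $\lk(\F)$ is disjoint from $\del(\F) \cup N(\F)$ and these together cover $V(\Delta)$, we get
\[
|V(\Delta)| = |\lk(\F)| + |\del(\F) \cup N(\F)| = \dep(\F) + |\del(\F)| + |N(\F)| - |\del(\F) \cap N(\F)|.
\]

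First I would verify disjointness. The sets $\lk(\F)$ and $\del(\F)$ are disjoint because each vertex is used at most once along the filtration (once a vertex $v_i$ is removed via a link or deletion, it is no longer present in $\Delta_{i+1}$). For any $v \in \lk(\F)$, the star $\sta_\Delta(v)$ contains $v$ (since $\{v\} \in \Delta$), so $v \in V(\sta_\Delta(v))$ and hence $v \notin N(\F)$. Thus $\lk(\F) \cap (\del(\F) \cup N(\F)) = \emptyset$.

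The substantive content is the covering claim: every vertex of $\Delta$ lies in $\lk(\F) \cup \del(\F) \cup N(\F)$. If $w \in V(\Delta)$ is not in $\lk(\F) \cup \del(\F)$, then $w$ must be \emph{silently removed} at some step of the filtration; that is, there is an index $i$ with $v_i \in \lk(\F)$ such that $w \in V(\Delta_i)$ but $w \notin V(\Delta_{i+1}) = V(\lk_{\Delta_i}(v_i))$. I would then argue, using how links and deletions interact along the filtration together with the spherical hypothesis, that $w \notin V(\sta_\Delta(u))$ for every $u \in \lk(\F)$, so $w \in N(\F)$.

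I expect the hardest step to be establishing the covering claim, since it requires non-adjacency between $w$ and \emph{every} linking vertex of $\F$, not merely the one at which $w$ is silently removed. A natural route is induction on the length of $\F$. Let $\F' := \Delta_2 \supseteq \cdots \supseteq \Delta_q$ be the sub-filtration of $\Delta_2$; by the inductive hypothesis, the identity holds for $(\Delta_2, \F')$. I would then split into the cases $v_1 \in \del(\F)$ and $v_1 \in \lk(\F)$, carefully comparing $V(\sta_\Delta(v))$ with $V(\sta_{\Delta_2}(v))$ for $v \in \lk(\F')$, and tracking how $v_1$ and, when the first step is a link, the silent removals $V(\Delta) \setminus V(\sta_\Delta(v_1))$ at step $1$ interact with the sets $\lk(\F), \del(\F), N(\F)$ relative to their primed counterparts. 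The bookkeeping on whether $v_1$ enters $N(\F)$ (equivalently whether $\{v_1, v\} \notin \Delta$ for every $v \in \lk(\F)$) is the main place the argument needs care.
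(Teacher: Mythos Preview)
Your plan coincides with the paper's: establish $V(\Delta)=\lk(\F)\sqcup(\del(\F)\cup N(\F))$ and apply inclusion--exclusion. The paper handles disjointness exactly as you do (using that $\lk(\F)$ is a face of $\Delta$), and for the covering step it simply writes ``$\Delta\lr{\lk(\F)}{\del(\F)}=\emptyset$; in particular $V(\Delta)=\lk(\F)\cup N(\F)\cup\del(\F)$''. So the paper does not carry out the induction you sketch; it treats the covering as immediate from the fact that the filtration terminates at $\emptyset$.

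Your instinct that the covering step is the crux is correct, and in fact the step fails with $N(\F)$ defined via stars in $\Delta$. Take $\Delta=\partial\Delta^2$, the boundary of a triangle on $\{1,2,3\}$; this is spherical (it is the independence complex of a single $2$-simplex, a simplicial tree) with $\tilde H_1(\Delta;\kk)\cong\kk$. The chain $\Delta\supseteq\lk_\Delta(1)\supseteq\lk_{\lk_\Delta(1)}(2)=\emptyset$ is a maximal filtration with $\lk(\F)=\{1,2\}$ and $\del(\F)=\emptyset$. Since $V(\sta_\Delta(1))=\{1,2,3\}$, we get $N(\F)=\emptyset$, and the asserted identity reads $3=2+0+0-0$. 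The vertex $3$ is silently removed when linking at $2$ inside $\Delta_2$, but $\{3,1\}\in\Delta$, so $3\notin N(\F)$: exactly the obstruction you anticipated when you said non-adjacency to \emph{every} linking vertex would be needed. Your proposed induction cannot close this gap, because the claim is not true as stated. If instead $N(\F)$ records the vertices silently removed at each link step (equivalently, stars are taken in the current $\Delta_i$ rather than in $\Delta$), then the covering becomes the tautology ``every vertex is removed at some step'' and the paper's one-line argument goes through without any induction.
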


\begin{proof}
    Let $(v_1, \dots, v_s)$ be the sequence of vertices that defines the maximal filtration $\F$. Then by definition we have $\Delta\lr{\lk(\F)}{\del(\F)} = \emptyset$. In particular
    $$
        V(\Delta) = \lk(\F) \cup N(\F) \cup \del(\F)
    $$
    and since $\lk(\F)$ is a face of $\Delta$, we know $\lk(\F) \cap N(\F) = \emptyset$. We also know that $\lk(\F)$ is disjoint 
from $\del(\F)$. So in the expression $|\lk(\F)| + |\del(\F)| + |N(\F)|$, the only vertices being counted more than once are the ones in $\del(\F) \cap N(\F)$.
\end{proof}

Let $\Delta$ be a non-acyclic spherical complex $\Delta$. In the spirit of the \textit{Auslander-Buchsbaum formula} from commutative algebra (see \cite[Chapter 1]{cmrings}), we define the \textbf{projective dimension} of a maximal filtration $\F$ of $\Delta$ as:
$$
    \pd(\F) = |\del(\F)| + |N(\F)| - |\del(\F) \cap N(\F)|.
$$

\begin{corollary}[{\bf Depth and projective dimension as combinatorial invariants}]
    Let $\Delta$ be a non-acyclic spherical complex, and  let $\F, \F'$ be two maximal filtrations of $\Delta$. Then 
    
    \begin{enumerate}
        \item $\dep(\F) = \dep(\F')$,
        \item $\pd(\F)=\pd(\F')$.
    \end{enumerate}
\end{corollary}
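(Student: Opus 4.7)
The plan is to observe that both parts are essentially direct consequences of results already established in the paper, and only require assembling the pieces.

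For part (1), I would first note that \cref{ternarytype}(2) already does most of the work: it asserts that whenever $\F$ is a maximal filtration of the non-acyclic spherical complex $\Delta$, the complex $\Delta$ has the homology of a sphere of dimension $\dep(\F)-1$. Since the reduced homology of $\Delta$ (and hence the dimension of the unique sphere whose homology $\Delta$ carries) is a topological invariant of $\Delta$ and does not depend on any choice of filtration, we immediately get $\dep(\F)-1=\dep(\F')-1$, hence $\dep(\F)=\dep(\F')$. So part (1) is really just a restatement of the parenthetical remark at the end of \cref{ternarytype}.

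For part (2), I would invoke \cref{l:combinatorialauslander}, which states that
\[
|V(\Delta)| \;=\; \dep(\F) + |\del(\F)| + |N(\F)| - |\del(\F)\cap N(\F)|.
\]
By the definition of $\pd(\F)$, the right-hand side is exactly $\dep(\F)+\pd(\F)$, so we obtain the Auslander--Buchsbaum-type identity
\[
\pd(\F) \;=\; |V(\Delta)| - \dep(\F).
\]
Since $|V(\Delta)|$ depends only on $\Delta$ and, by part (1), $\dep(\F)$ also depends only on $\Delta$, the same is true of $\pd(\F)$. Applying the identity to both $\F$ and $\F'$ and subtracting yields $\pd(\F)=\pd(\F')$.

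I do not expect any genuine obstacle here; the entire content of the corollary has already been packaged in the preceding theorem and lemma. The only subtlety worth mentioning in the write-up is that \cref{ternarytype}(2) is what makes part (1) true (since a priori $\dep(\F)$ is defined as a cardinality depending on $\F$), and that \cref{l:combinatorialauslander} is exactly the combinatorial identity that converts an invariance statement about $\dep$ into one about $\pd$, mirroring how the Auslander--Buchsbaum formula transports the invariance of depth into the invariance of projective dimension on the algebraic side (a connection that will be made precise in~\cref{s:commalg}).
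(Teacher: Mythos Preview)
Your proposal is correct and matches the paper's own proof, which simply states that the corollary follows directly from \cref{ternarytype} and \cref{l:combinatorialauslander}. You have merely spelled out explicitly how these two results are combined, which is exactly the intended argument.
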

 
\begin{proof}
    This follows directly from  \cref{ternarytype} and  \cref{l:combinatorialauslander}. 
\end{proof}

In light of the last results we give the following definitions, that are the combinatorial counterparts of the algebraic invariants (see \cref{s:commalg}).

\begin{definition}[{\bf Depth and projective dimension of spherical complexes}]\label{d:homological}
    Let $\Delta$ be a spherical complex. If $\Delta$ is not acyclic, let $\F$ be any maximal filtration of $\Delta$. We call $$\pd(\Delta) = \pd(\F) \qand \dep(\Delta) = \dep(\F)$$ the  
    \textbf{projective dimension} and \textbf{depth} of $\Delta$, respectively.

    If $\Delta$ is acyclic, we set $$\pd(\Delta) = \max\{\pd(\Sigma) \st \Sigma \mbox{ induced subcomplex of $\Delta$ and $\Sigma$ not acyclic} \}$$ and $$\dep(\Delta) = |V(\Delta)| - \pd(\Delta).$$
\end{definition}

The following statement is a combinatorial version of the well-known Auslander-Buchsbaum formula from commutative algebra, which follows now directly from \cref{l:combinatorialauslander} and \cref{d:homological}.

\begin{theorem}[\textbf{Auslander-Buchsbaum for spherical complexes}]\label{combinatorialauslander}
    Let $\Delta$ be a spherical complex. Then
    $$
        |V(\Delta)| = \dep(\Delta) + \pd(\Delta).
    $$
\end{theorem}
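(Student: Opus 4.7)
The plan is to split the argument along the two cases of Definition \ref{d:homological}, namely whether $\Delta$ is acyclic or not, since $\dep(\Delta)$ and $\pd(\Delta)$ are defined differently in these two situations.

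When $\Delta$ is acyclic, the identity is tautological: by Definition \ref{d:homological}, the depth is \emph{defined} by the equation $\dep(\Delta) = |V(\Delta)| - \pd(\Delta)$, so $|V(\Delta)| = \dep(\Delta) + \pd(\Delta)$ holds by construction and there is nothing to prove. This case is set up in the definition precisely so that the Auslander--Buchsbaum equality is enforced for acyclic complexes without further work.

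When $\Delta$ is not acyclic, I would invoke Proposition \ref{p:filtrationsexist} to fix a maximal filtration $\F$ of $\Delta$. By Definition \ref{d:homological} (and the preceding corollary showing that depth and projective dimension of filtrations are invariants of $\Delta$), we have
$$\dep(\Delta) = \dep(\F) \qand \pd(\Delta) = \pd(\F) = |\del(\F)| + |N(\F)| - |\del(\F) \cap N(\F)|,$$
where the second equality is simply the defining formula for $\pd(\F)$. Substituting these into the vertex count of Lemma \ref{l:combinatorialauslander},
$$|V(\Delta)| = \dep(\F) + |\del(\F)| + |N(\F)| - |\del(\F) \cap N(\F)|,$$
collapses the right-hand side to $\dep(\Delta) + \pd(\Delta)$, as required.

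There is no real obstacle, since Lemma \ref{l:combinatorialauslander} already contains the essential combinatorial content (the inclusion-exclusion decomposition of $V(\Delta)$ into $\lk(\F)$, $\del(\F)$, and $N(\F)$), and Definition \ref{d:homological} has been crafted so that the acyclic case is folded in automatically. The proof is therefore a short bookkeeping step that glues these two pieces together.
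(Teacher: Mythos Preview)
Your proposal is correct and matches the paper's approach exactly: the paper simply states that the theorem ``follows now directly from \cref{l:combinatorialauslander} and \cref{d:homological},'' and you have spelled out precisely that derivation, splitting into the non-acyclic case (handled by Lemma \ref{l:combinatorialauslander} together with the definition of $\pd(\F)$) and the acyclic case (where the equality is built into Definition \ref{d:homological}).
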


\section{Applications to commutative algebra}\label{s:commalg}

Our original motivation for the topological investigations in this paper was computing algebraic invariants of squarefree monomial ideals. This section is devoted to translating the earlier results of the paper into the language of algebra. Throughout let $R = \kk[x_1, \dots, x_n]$ be the polynomial ring in $n$ variables. 

 Given a simplicial complex $\Delta = \tuple{F_1, \dots, F_s}$ with vertex set $\{1, \dots, n\}$, the ideal 
    $$
        I_\Delta = (x_{i_1}\dots x_{i_r} \st  \{i_1, \dots, i_r\} \not \in \Delta) \subseteq R
    $$
    is called the \textbf{Stanley-Reisner} ideal of $\Delta$.

    A sequence of polynomials $f_1, \dots, f_n \in (x_1, \dots, x_n)$ is a \textbf{regular sequence} for $R/I_\Delta$ if 
    \begin{itemize}
        \item[-] $f_i$ is a nonzero divisor of $S/\big ( (f_1, \dots, f_{i - 1})+ I_\Delta\big )$ for all $i$, and 
        \item[-] $R/\big ((f_1, \dots, f_n)+ I_\Delta\big ) \neq 0$.
    \end{itemize}
    The \textbf{depth} of $R/I_\Delta$ is the length of a maximal regular sequence for $R/I_\Delta$.

\begin{example}[\textbf{Filtrations and regular sequences}, Example 1.4 in \cite{MR4093701}]
    Let $R = \kk[x_1, \dots, x_8]$ and $I(G)$ the Stanley-Reisner ideal of the independence complex $\Delta$ of the following graph $G$ (in other words, $I(G)$ is the edge ideal of $G$).

    \begin{center}
    \begin{tikzpicture}

        \tikzstyle{point}=[circle,thick,draw=black,fill=black,inner sep=0pt,minimum width=4pt,minimum height=4pt]
    
     \node (a)[point,label={[xshift=-0.4cm, yshift=-.10 cm]: $1$}] at (5,2.5) {};
    
        \node (b)[point,,label={[xshift=-0.4cm, yshift=-.10 cm]:$2$}] at (3,2.5) {};
    
    \node (d)[point,,label={[xshift=0cm, yshift=0.1 cm]: $4$}] at (4,0.5) {};
      \node(c)[point,,label={[xshift=-0.4cm, yshift=-.20 cm]:$3$}] at (3,1.5) {};
    
       \node (e)[point,label={[xshift=-0.4cm, yshift=-.10 cm]:$5$}] at (5,1.5) {};
    
        \node (f)[point,label={[label distance=0cm]3:$6$}] at (6,2) {};
    
    \node (g)[point,,label={[xshift=0 cm, yshift=0 cm]: $7$}] at (6,1) {};
      \node(h)[point,,label={[xshift=0.4cm, yshift=-0.3 cm]:$8$}] at (7,.5) {};
    
    \draw (a.center) -- (b.center) -- (c.center) -- (d.center) -- (e.center) -- (a.center);
    \draw (a.center) -- (f.center);
    \draw (e.center) -- (g.center) -- (h.center);
    \end{tikzpicture}
\end{center}

In \cite[Example 1.4]{MR4093701}, the authors apply their results to show that 
$$
    x_6 + x_1, x_8 + x_7, x_4 + x_5 + x_3
$$
forms a $R/I(G)$-regular sequence.

 We now show that the regular sequence above can be understood from the point of view of filtrations. Observe that $\Delta\lr{\emptyset}{1}=\Delta -\{1\}$ is the independence complex of a forest with an isolated vertex, and hence is acyclic. On the other hand, $\Delta \lr{1}{\emptyset}$ is the independence complex of the graph below, and in particular $\Delta \lr{1}{\emptyset}$ is not acyclic.  
\begin{center}
    \begin{tikzpicture}

        \tikzstyle{point}=[circle,thick,draw=black,fill=black,inner sep=0pt,minimum width=4pt,minimum height=4pt]
    
    \node (d)[point,,label={[xshift=0cm, yshift=0.1 cm]: $4$}] at (4,0.5) {};
      \node(c)[point,,label={[xshift=-0.4cm, yshift=-.20 cm]:$3$}] at (3,1.5) {};
    
     \node (g)[point,,label={[xshift=0 cm, yshift=0 cm]: $7$}] at (6,1) {};
      \node(h)[point,,label={[xshift=0.4cm, yshift=-0.3 cm]:$8$}] at (7,.5) {};
    
    \draw (c.center) -- (d.center);
    \draw (g.center) -- (h.center);
    \end{tikzpicture}
\end{center}

The discussion above implies the following equality 
$$
    (\Delta\lr{\emptyset}{\emptyset}, \Delta\lr{1}{\emptyset}, \Delta\lr{\emptyset}{1}) = (2, 1, \ast).
$$
  In particular, $\Delta$ has a maximal filtration by~\cref{p:filtrationsexist}. The following is a maximal filtration of $\Delta$:
 $$
    \F: \ \ \Delta \supseteq  \Delta \lr{6}{\emptyset} \supseteq  \Delta\lr{6}{2} \supseteq  \Delta\lr{\{6, 8\}}{2} \supseteq  \Delta\lr{\{6, 8, 4\}}{2} = \emptyset.
$$
Note that $$x_6 + x_1 = \sum_{i \in N[6]} x_i, \quad x_8 + x_7 = \sum_{i \in N[8]} x_i, \quad  x_3 + x_4 + x_5 = \sum_{i \in N[4]} x_i.$$

In other words, the sum of neighbors (in $G$) of each element in $\lk(\F)$ gives us the maximal regular sequence from \cite{MR4093701} that realizes the depth of $R/I(G)$. This pattern seems to suggest that filtrations might be an effective tool to build regular sequences, please see \cref{r:filtrations-star-packing} and   \cref{q:star-packing}.
\end{example}

A {\bf free resolution} of $R/I_\Delta$ is an exact sequence of free $R$-modules encoding the  relations between the generators of $I_\Delta$,    
$$                                                              
0 \to R^{a_r} \stackrel{d_r}{\longrightarrow} \cdots \stackrel{d_2}{\longrightarrow} R^{a_1} \stackrel{d_1}{\longrightarrow} \
R^{a_0}          
  $$
where the image of $d_1$ is $I_\Delta$. Alternatively, a free resolution of $R/I_\Delta$ can be represented by an exact sequence of $\kk$-vector spaces of the same dimensions as the free modules above.
The smallest possible such sequence, or in other words the sequence
 with the smallest possible values for the ranks $a_i$, is
the (unique up to isomorphism of complexes) {\bf minimal free resolution } of $I_\Delta$
\begin{equation}\label{e:mfr}
  0 \to R^{\beta_p} \stackrel{d_p}{\longrightarrow} \cdots
  \stackrel{d_2}{\longrightarrow} R^{\beta_1}
  \stackrel{d_1}{\longrightarrow} R^{\beta_0}.
\end{equation}
The minimal free resolution \eqref{e:mfr} contains many of the algebraic invariants of $R/I_\Delta$, including the 
{\bf betti numbers} $\beta_i$ and the
{\bf projective dimension} $p$ of $R/I_\Delta$.

The betti numbers $\beta_i$ of $R/I_\Delta$ can be computed using a {\bf multigrading}
$$    \beta_i = \sum_\m { \beta_{i, \m}}$$
    where $\m$ runs over the monomials of $R$,  and the numbers $\beta_{i, \m}(R/I_\Delta)$, called the \textbf{multigraded betti numbers}, can be calculated as dimensions of homology spaces of induced subcomplexes of $\Delta$ (\cref{simplicialresolutions}). 

    Finally, the \textbf{Castelnuovo-Mumford regularity} of $R/I_\Delta$ is 
    $$
        \reg(R/I_\Delta) := \max (\deg \m - i \st   \beta_{i, \m} \neq 0).
    $$
 For more details on free resolutions we refer the reader to~\cite{cmrings}.

The main result that allows us to apply results from previous sections to the study of minimal free resolutions is Hochster's formula (\cref{simplicialresolutions}). To state this result, we first note that for a monomial $\m$, the \textbf{support} of $\m$ is the set $\supp(\m) = \{i \st x_i \mid \m\}$. 
If $\Delta$ is a simplicial complex with vertex set $[n]$, we use $\Delta_\m$ to denote the induced subcomplex $\Delta \lr{\emptyset}{[n] \setminus \supp(\m)}$. 

\begin{theorem}[\textbf{Hochster's formula}, \cite{hochsterformula}]\label{simplicialresolutions}
    Let $R = \kk[x_1, \dots, x_n]$ and $\Delta$ a simplicial complex with vertex set $[n]$. Then for a squarefree monomial $\m \in R$
      $$
         \beta_{i, \m}(R/I_\Delta) = \dim_{\kk} \tilde H_{\deg \m - i - 1}(\Delta_{\m}; \kk).
      $$ Moreover, $\beta_{i, \m}(R/I_\Delta) = 0$ if $\m$ is not squarefree.
\end{theorem}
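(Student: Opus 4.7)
The plan is to prove Hochster's formula via the standard Koszul-complex calculation of multigraded Tor. First, I would recast the betti numbers as
$$\beta_{i,\m}(R/I_\Delta) = \dim_\kk \operatorname{Tor}_i^R(R/I_\Delta,\kk)_\m,$$
using the $\Z^n$-grading on a minimal free resolution of $R/I_\Delta$. The key tool is the Koszul complex $K_\bullet = \bigwedge^\bullet R^n$, with basis $\{e_S : S \subseteq [n]\}$ in homological degree $|S|$ and Koszul differential $d(e_S) = \sum_{i \in S}(-1)^{\sigma(i,S)} x_i\, e_{S \setminus \{i\}}$; this is a $\Z^n$-graded free resolution of $\kk$ over $R$, so $\operatorname{Tor}_i^R(R/I_\Delta,\kk) \cong H_i(R/I_\Delta \otimes_R K_\bullet)$ as $\Z^n$-graded vector spaces.

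Next I would analyze the multidegree-$\m$ strand of this tensor complex. For $\m = x^a$ with $a \in \N^n$, a $\kk$-basis of $(R/I_\Delta \otimes K_p)_a$ is given by the symbols $\overline{x^{a - e_S}} \otimes e_S$ with $|S| = p$, $a - e_S \in \N^n$, and $\supp(a - e_S) \in \Delta$. When $\m$ is squarefree of degree $d$ these constraints force $S \subseteq \supp(\m)$, and the Stanley-Reisner condition becomes $T := \supp(\m) \setminus S \in \Delta_\m$. Setting $i = d - p$ (so $|T| = i$) identifies this basis with the $(d - i - 1)$-dimensional faces of $\Delta_\m$, with the case $T = \emptyset$ supplying the augmentation.

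The crucial step is verifying that under this identification the Koszul differential becomes, up to a global sign, the augmented simplicial boundary of $\Delta_\m$: removing $j$ from $S$ with weight $\pm x_j$ corresponds to adding $j$ to $T = \supp(\m) \setminus S$, which is exactly a boundary incidence. Once this isomorphism of chain complexes is in place, taking homology yields
$$\beta_{i,\m}(R/I_\Delta) = \dim_\kk \tilde H_{d - i - 1}(\Delta_\m;\kk).$$
For the vanishing claim when $\m$ is not squarefree, I would pick an index $j$ with $a_j \geq 2$ and exhibit a contracting homotopy on the multidegree-$a$ strand of $R/I_\Delta \otimes K_\bullet$ given essentially by interior multiplication by the dual basis vector $e_j^\ast$, forcing all homology in that strand to vanish. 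The main obstacle I expect is not the combinatorial relabeling, which is essentially bookkeeping, but getting the Koszul signs $(-1)^{\sigma(i,S)}$ to line up with the simplicial boundary and constructing the homotopy cleanly in non-squarefree multidegrees.
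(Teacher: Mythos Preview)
Your sketch is the standard Koszul-complex proof of Hochster's formula and is correct in outline; the sign bookkeeping and the contracting homotopy in non-squarefree multidegree both go through as you describe. However, there is nothing to compare against: the paper does not give its own proof of this theorem. It is quoted as a classical result with a citation to Hochster's original paper and is used as a black box (via \cref{simplicialresolutions}) in the proofs of \cref{mainthm} and its corollaries. So your proposal is not an alternative to the paper's argument but rather a proof of a cited background result.
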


\begin{theorem}[\textbf{Betti numbers of spherical complexes}]\label{mainthm}
    Let $\Delta$ be a $\kk$-spherical complex and  $\m$ any squarefree monomial in $R = \kk[x_1, \dots, x_n]$.  Then
    the following are equivalent.
    \begin{enumerate}
    \item $\beta_{i, \m}(R/I_\Delta)\neq 0$ 
    \item $\beta_{i, \m}(R/I_\Delta)=1$. 
    \item $i=\pd(\Delta_\m)$ and $\Delta_\m$ is not acyclic 
    \item $i=\pd(\Delta_\m)$ and $\sign(\Delta_\m)=-1$.
    \end{enumerate}
   \end{theorem}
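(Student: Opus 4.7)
The plan is to deduce everything from Hochster's formula together with the structural results already proved in this section, since the four conditions each describe a different facet of the same phenomenon: the nonvanishing of a single homology group of $\Delta_\m$.

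First I would observe that the induced subcomplex $\Delta_\m = \Delta\lr{\emptyset}{[n]\setminus \supp(\m)}$ is obtained from $\Delta$ by a sequence of vertex deletions, so by the definition of a $\kk$-spherical complex it is itself $\kk$-spherical. In particular $\tilde H_j(\Delta_\m;\kk)$ is either $0$ or $\kk$ for every $j$, and is nonzero for at most one value of $j$. By Hochster's formula (\cref{simplicialresolutions}),
$$\beta_{i,\m}(R/I_\Delta) = \dim_\kk \tilde H_{\deg \m - i - 1}(\Delta_\m;\kk),$$
so this dimension is either $0$ or $1$. This immediately gives the equivalence (1) $\iff$ (2).

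Next I would handle (3) $\iff$ (4): by \cref{ternarytype}(1), $\Delta_\m$ is acyclic if and only if $\sign(\Delta_\m) = 1$, so $\Delta_\m$ is not acyclic if and only if $\sign(\Delta_\m) = -1$. Given (3) and (4) have the same integer condition on $i$, they are equivalent.

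Finally, to connect the algebraic and combinatorial sides, I would show (1) $\iff$ (3). Hochster's formula above tells us that $\beta_{i,\m}(R/I_\Delta)\neq 0$ if and only if $\Delta_\m$ is not acyclic and the unique nonvanishing reduced homology of $\Delta_\m$ lies in degree $\deg \m - i - 1$. By \cref{ternarytype}(2), when $\Delta_\m$ is not acyclic it has the homology of a sphere of dimension $\dep(\Delta_\m) - 1$, so we need $\deg \m - i - 1 = \dep(\Delta_\m) - 1$, i.e.\ $i = \deg\m - \dep(\Delta_\m)$. Since $\m$ is squarefree, $\deg\m = |\supp(\m)| = |V(\Delta_\m)|$, and the combinatorial Auslander--Buchsbaum formula (\cref{combinatorialauslander}) yields $|V(\Delta_\m)| = \dep(\Delta_\m) + \pd(\Delta_\m)$. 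Substituting, $i = \pd(\Delta_\m)$, which is exactly (3). None of the steps look like a real obstacle; the only thing to double-check is the bookkeeping in the degenerate cases (when $\supp(\m) = \emptyset$ or $\Delta_\m$ has only the empty face), which are handled consistently by the $S^{-1}$ convention already adopted in the definition of a maximal filtration.
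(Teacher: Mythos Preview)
Your proof is correct and follows essentially the same approach as the paper's own argument: both use Hochster's formula together with \cref{ternarytype} and the combinatorial Auslander--Buchsbaum formula (\cref{combinatorialauslander}) to translate the nonvanishing of $\beta_{i,\m}$ into the condition $i=\pd(\Delta_\m)$ with $\sign(\Delta_\m)=-1$. The only difference is organizational: you separate the equivalences into (1)$\iff$(2), (3)$\iff$(4), and (1)$\iff$(3), whereas the paper derives a single display for $\tilde H_{|V(\Delta_\m)|-i-1}(\Delta_\m;\kk)$ and reads all four conditions off at once.
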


\begin{proof}
    By definition, for every induced subcomplex $\Delta_\m$ of $\Delta$, the
    homology groups of $\Delta_\m$ are either $0$ or one (and only one)
    homology group is $\kk$. By \cref{ternarytype}, $\Delta_\m$ has a
    nonzero homology group if and only if $\sign(\Delta_\m) = -1$, moreover,
    the nonzero homology group is $\tilde H_{\dep(\Delta_\m) - 1}(\Delta_\m; \kk)
    \cong \kk$. By \cref{combinatorialauslander} we have
    $$
        |V(\Delta_\m)| = \pd(\Delta_\m) + \dep(\Delta_\m)
    $$
    and so we can rewrite the isomorphism of the homology group as: 
    $$
        \tilde H_{|V(\Delta_\m)| - \pd(\Delta_\m) - 1}(\Delta_\m; \kk) \cong \kk.
    $$
    In particular we have:

    \begin{equation}\label{formula}
        \tilde H_{|V(\Delta_\m)| - i - 1}(\Delta_\m; \kk) \cong \begin{cases}
            \kk \ \ \ \text{if $\sign(\Delta_\m) = -1$ and $\pd(\Delta_\m) = i$} \\
            0 \ \ \ \text{otherwise.}
        \end{cases}
    \end{equation}
    Lastly, \cref{simplicialresolutions} implies 

    $$
         \beta_{i, \m}(R/I_\Delta) = \dim \tilde H_{\deg \m - i - 1}(\Delta_{\m}; \kk) = \dim \tilde H_{|V(\Delta_\m)| - i - 1}(\Delta_{\m}; \kk)
    $$
    so the result follows by \eqref{formula}.
\end{proof}

The following results are consequences of \cref{t:topbetti}, and imply, for example, that the multigraded betti numbers of ternary graphs behave nicely, that is, top betti numbers come from the top.

\begin{corollary}\label{algcombinvs}
    Let $\Delta$ be a $\kk$-spherical complex and $I_\Delta \subseteq R = \kk[x_1,\dots,x_n]$ its Stanley-Reisner ideal. Then
    \begin{itemize}
        \item $\pd(\Delta) = \pd(R/I_\Delta)$
        \item $\dep(\Delta) = \dep(R/I_\Delta)$
    \end{itemize}
\end{corollary}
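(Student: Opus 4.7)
The plan is to prove the projective dimension equality first, and then derive the depth equality from the two Auslander--Buchsbaum formulas (the combinatorial \cref{combinatorialauslander} and the classical one for modules over a polynomial ring, which applies since $R/I_\Delta$ has finite projective dimension by Hilbert's syzygy theorem). So the real content is the projective dimension equality.

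The first step is to rewrite $\pd(R/I_\Delta)$ combinatorially. Combining Hochster's formula (\cref{simplicialresolutions}) with \cref{mainthm} (which applies to every induced subcomplex $\Delta_\m$, since spherical complexes are closed under induced subcomplexes by definition) gives
$$
\pd(R/I_\Delta) \;=\; \max\bigl\{\, i \st \beta_{i,\m}(R/I_\Delta) \neq 0 \ \text{for some squarefree } \m \,\bigr\}
\;=\; \max\bigl\{\, \pd(\Delta_\m) \st \Delta_\m \text{ is not acyclic} \,\bigr\}.
$$
When $\Delta$ is acyclic, the right hand side is exactly $\pd(\Delta)$ by \cref{d:homological} and we are done. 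When $\Delta$ is not acyclic, taking $\m = x_1 \cdots x_n$ shows $\pd(\Delta) \le \pd(R/I_\Delta)$, so the only thing left is the reverse inequality $\pd(\Delta_\m) \le \pd(\Delta)$ for every non-acyclic induced subcomplex $\Delta_\m$.

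This is the main obstacle, and it is where \cref{t:topbetti} does the work. Writing $\Sigma = \Delta_\m = \Delta\lr{\emptyset}{V(\Delta)\setminus \supp(\m)}$ with $|Y| = n - \deg(\m)$, \cref{t:topbetti} gives $d_\Sigma \ge d_\Delta - (n - \deg(\m))$. Translating sphere dimensions into depths via \cref{ternarytype} ($d_\Delta = \dep(\Delta) - 1$ and $d_\Sigma = \dep(\Sigma) - 1$) yields
$$
\dep(\Delta) - \dep(\Sigma) \;\le\; n - \deg(\m).
$$
Applying the combinatorial Auslander--Buchsbaum formula \cref{combinatorialauslander} to both $\Delta$ and $\Sigma$ rewrites this as $\pd(\Sigma) \le \pd(\Delta)$, as desired. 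Combining with the previous paragraph gives $\pd(\Delta) = \pd(R/I_\Delta)$ in all cases.

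For the depth statement, I would simply subtract: by \cref{combinatorialauslander} (together with the convention built into \cref{d:homological} in the acyclic case) we have $\pd(\Delta) + \dep(\Delta) = |V(\Delta)| = n$, and by the classical Auslander--Buchsbaum formula $\pd(R/I_\Delta) + \dep(R/I_\Delta) = n$. The equality of projective dimensions therefore forces $\dep(\Delta) = \dep(R/I_\Delta)$, completing the proof. The only nontrivial step throughout is the translation of \cref{t:topbetti} into the inequality $\pd(\Delta_\m) \le \pd(\Delta)$; everything else is a direct combination of Hochster's formula, \cref{mainthm}, and the two Auslander--Buchsbaum formulas.
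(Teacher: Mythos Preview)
Your proposal is correct and follows essentially the same approach as the paper: both arguments use \cref{mainthm} to rewrite $\pd(R/I_\Delta)$ as the maximum of $\pd(\Sigma)$ over non-acyclic induced subcomplexes, invoke the lower bound of \cref{t:topbetti} together with \cref{combinatorialauslander} to show $\pd(\Sigma)\le\pd(\Delta)$ in the non-acyclic case, and finish with the two Auslander--Buchsbaum formulas. The only difference is organizational (you unify the two cases before isolating the extra inequality needed when $\Delta$ is not acyclic), not mathematical.
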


\begin{proof}
    We split the proof in two cases, when $\Delta$ is acyclic, and when it is not.

    \begin{enumerate}
        \item Assume $\Delta$ has the homology of a $d_\Delta$-dimensional sphere and let $\Sigma = \Delta\lr{A}{B}$ be any induced complex of $\Delta$ such that $\Sigma$ has the homology of a $d_\Sigma$-dimensional sphere. Note that for any complex $\Sigma = \Delta\lr{A}{B}$ we have $|V(\Sigma)| + |A| + |B| \leq |V(\Delta)|$, with equality for example if $A = \emptyset$.
        
        By the left hand inequality of \cref{t:topbetti} and the remark above, we have the following inequalities.
        
        \begin{align*}
            d_\Delta - |A| - |B| & \leq d_\Sigma \\
            d_\Delta - d_\Sigma & \leq |A| + |B| \leq |V(\Delta)| - |V(\Sigma)| \\
            |V(\Sigma)| - d_\Sigma & \leq |V(\Delta)| - d_\Delta \\
            \pd(\Sigma) & \leq \pd(\Delta)
        \end{align*}
        
        By \cref{mainthm} and the last inequality above, we see that 
                $$
        \pd(R/I_\Delta) = \max (i \st  \beta_i(R/I_\Delta) \neq 0) = \max(\pd(\Sigma) \st \sign(\Sigma) = -1) = \pd(\Delta).
        $$
        From the Auslander-Buchsbaum formulas
        $$
            \pd(R/I_\Delta) + \dep(R/I_\Delta) = \dim R = |V(\Delta)| = \pd(\Delta) + \dep(\Delta)
        $$
        so the second item also holds.

        \item If $\Delta$ is acyclic, then by \cref{mainthm} and by definition we have

        $$
            \pd(\Delta) = \max(\pd(\Sigma) \st \sign(\Sigma) = -1) = \max(i \st  \beta_{i}(R/I_\Delta) \neq 0) = \pd(R/I_\Delta)
        $$
        so the first item holds. The second item then holds by the Auslander-Buchsbaum formulas.
    \end{enumerate}
\end{proof}

\begin{corollary}\label{depthreg}
    Let $\Delta$ be a non-acyclic $\kk$-spherical complex. Then 
    $$
        \reg(R/I_\Delta) = \dep(R/I_\Delta).
    $$
\end{corollary}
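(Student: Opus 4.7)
The plan is to compute both $\reg(R/I_\Delta)$ and $\dep(R/I_\Delta)$ directly in terms of the dimension $d_\Delta$ of the sphere whose homology $\Delta$ carries, and verify that both equal $d_\Delta + 1$. Since $\Delta$ is non-acyclic and $\kk$-spherical, fix $d_\Delta$ with $\tilde H_{d_\Delta}(\Delta; \kk) \cong \kk$.

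First I would handle the regularity side. By Hochster's formula (\cref{simplicialresolutions}), $\beta_{i,\m}(R/I_\Delta) \neq 0$ forces $\m$ to be squarefree with $\supp(\m) = V(\Delta_\m)$, and moreover forces $\Delta_\m$ to have nontrivial homology. Since $\Delta_\m$ is again $\kk$-spherical (as an induced subcomplex of $\Delta$), its nontrivial homology is concentrated in a single degree $d_{\Delta_\m}$, and Hochster forces $\deg \m - i - 1 = d_{\Delta_\m}$, so $\deg \m - i = d_{\Delta_\m} + 1$. Taking the maximum over all such $\m$ gives
\[
    \reg(R/I_\Delta) = \max\{d_{\Delta_\m} + 1 \st \Delta_\m \text{ not acyclic}\}.
\]
By the upper bound in \cref{t:topbetti} applied to induced subcomplexes $\Delta_\m = \Delta \lr{\emptyset}{V(\Delta) \setminus \supp(\m)}$, we have $d_{\Delta_\m} \leq d_\Delta$, and this bound is attained by taking $\m = x_1 \cdots x_n$, so $\reg(R/I_\Delta) = d_\Delta + 1$. (This is essentially the content of \cref{c:lerayspherical} in the guise of a regularity statement.)

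Next I would handle the depth side. By \cref{algcombinvs} we have $\dep(R/I_\Delta) = \dep(\Delta)$, and because $\Delta$ is non-acyclic, $\dep(\Delta) = \dep(\F)$ for any maximal filtration $\F$ of $\Delta$. \cref{ternarytype}(2) then says that $\Delta$ has the homology of a $(\dep(\F) - 1)$-dimensional sphere; since this dimension is forced to equal $d_\Delta$, we conclude $\dep(R/I_\Delta) = \dep(\F) = d_\Delta + 1$.

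Combining the two computations yields $\reg(R/I_\Delta) = d_\Delta + 1 = \dep(R/I_\Delta)$, which is the desired equality. I do not expect a serious obstacle here: the substantive work was already done in \cref{t:topbetti} (upper bound on homological dimensions of induced subcomplexes), \cref{ternarytype} (depth equals the dimension of the sphere plus one), \cref{mainthm} (characterization of nonzero multigraded Betti numbers), and \cref{algcombinvs} (combinatorial depth equals algebraic depth). The only care needed is the book-keeping translating between $\deg \m$, $|V(\Delta_\m)|$, and Hochster's homological degree shift, and the observation that the maximum in the regularity formula is achieved by $\m = x_1 \cdots x_n$ thanks to the upper bound in \cref{t:topbetti}.
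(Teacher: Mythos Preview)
Your proposal is correct and follows essentially the same route as the paper: both use Hochster's formula (via \cref{mainthm}) to write the regularity as the maximum of $d_{\Delta_\m}+1$ (equivalently $\dep(\Sigma)$) over non-acyclic induced subcomplexes, invoke the upper bound from \cref{t:topbetti} to show this maximum is attained at $\Delta$ itself, and then identify $d_\Delta+1$ with $\dep(R/I_\Delta)$ via \cref{ternarytype} and \cref{algcombinvs}. Your version is just a slightly more explicit unpacking of the same chain of equalities.
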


\begin{proof}
    By \cref{mainthm} and the Auslander-Buchsbaum formulas we have 
    \begin{align*}
        \reg(R/I_\Delta) & = \max(\deg \m - i \st  \beta_{i, \m}(R/I_\Delta) \neq 0) \\
                     & = \max(|V(\Sigma)| - \pd(\Sigma) \st \sign(\Sigma) = -1 \qand \Sigma \mbox{ induced}) \\
                     & = \max(\dep(\Sigma) \st \sign(\Sigma) = -1 \qand \Sigma \mbox{ induced})
    \end{align*}

    From the right hand inequality of \cref{t:topbetti}, and since $\Delta$ is not acyclic we conclude 
    $$
        \reg(R/I_\Delta) = \max(\dep(\Sigma) \st \sign(\Sigma) = -1) = \dep(\Delta) = \dep(R/I_\Delta).
    $$
\end{proof}

\begin{example}\label{illustrativeexample}
    Let  $I(G) = (x_1 x_2, x_2 x_3, x_3 x_4, x_4 x_1, x_3 x_5, x_1 x_6) \subseteq S = \kk[x_1,\dots, x_6]$ be the edge ideal of a square with two pendant leaves at nonadjacent vertices, and $\Delta$ its independence complex. The sequence
    $$
        \Delta \supseteq  \Delta\lr{1}{\emptyset} \supseteq  \Delta\lr{1, 5}{\emptyset} = \emptyset
    $$
    is a maximal filtration of $\Delta$, so we conclude $\reg(R/I(G)) = \dep(R/I(G)) = 2$.
\end{example}

\begin{remark}[\textbf{Leray number and regularity}]
    It is well known that the Leray number of $\Delta$ is related to the Castelnuovo-Mumford regularity of $R/I_\Delta$ (see~\cite{MR2259083}). More specifically, 
    $$
        L_\kk(\Delta) = \reg(R/I_\Delta).
    $$
    In particular, \cref{depthreg} can be seen as an algebraic analogue of~\cref{c:lerayspherical}.
\end{remark}

\section{Concluding remarks and questions}\label{s:questions}

\begin{remark}[\textbf{Filtrations, star packings, regularity and regular sequences}]\label{r:filtrations-star-packing}

    Each step of the filtration  in \cref{illustrativeexample} corresponds to removing a star from the graph. Therefore the filtration can be understood as a \emph{star packing} of $G$, as in \cite{MR3213742}, where the authors define star packings satisfying specific properties (see the introduction of \cite{MR3213742} for the exact definition) and show that the regularity of edge ideals of graphs is bounded above by an invariant based on maximizing such star packings. 

    The authors in \cite{MR4093701} also use star packings satisfying specific properties, but instead of an upper bound for regularity, their result gives a lower bound for the depth. 
\end{remark}

To showcase how filtrations could be used to simplify the search for homological invariants of graphs, we 
apply filtrations in \cref{p:Tran} to recover a result about unicyclic graphs that was recently proved in~\cite{tran}. 
In what follows, we write $G\lr{A}{B}$ instead of $\Ind(G)\lr{A}{B}$.
The statement we prove will assume the graph is ternary as it is most closely related to the theme of this paper, but the same argument can be made to prove the non-ternary case. To do so we need the following statements. 

\begin{lemma}\label{l:1}  Let $G$ be a ternary graph $G$ with two connected components $H_1$ and  $H_2$. Then  $\Ind(G)$ is not contractible if and only if  $\Ind(H_1)$ and $\Ind(H_2)$ are not contractible. In this case, $\Ind(G)\cong S^{n + m + 1}$ where $\Ind(H_1) \cong S^{n}$ and $\Ind(H_2) \cong S^m$. 
\end{lemma}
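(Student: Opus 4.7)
The plan is to reduce the statement to the classical identity
\[
\Ind(H_1 \sqcup H_2) \;=\; \Ind(H_1) * \Ind(H_2),
\]
where $*$ denotes the simplicial join. This identity is immediate from the definition of independence complex: a subset $A \subseteq V(G)$ is independent in $G = H_1 \sqcup H_2$ if and only if $A \cap V(H_i)$ is independent in $H_i$ for each $i$, since the two components share no edges. The face set of the join is exactly $\{\sigma_1 \cup \sigma_2 \st \sigma_i \in \Ind(H_i)\}$, which matches.

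Next I would invoke two standard facts about joins of simplicial complexes: first, $A * B$ is contractible whenever either $A$ or $B$ is contractible (the join deformation retracts onto the cone over the contractible factor); second, $S^n * S^m \simeq S^{n+m+1}$. Since $G$ is ternary, each induced subgraph $H_i$ is also ternary, so by \cref{ternaryintro} each $\Ind(H_i)$ is either contractible or homotopy equivalent to a sphere. Combining these observations gives both directions of the biconditional: if some $\Ind(H_i)$ is contractible then $\Ind(G) = \Ind(H_1) * \Ind(H_2)$ is contractible; and if both are spheres $S^n$ and $S^m$, then $\Ind(G) \simeq S^n * S^m \simeq S^{n+m+1}$, as claimed.

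As an alternative consistent with the filtration machinery developed in \cref{sec-filtration}, one can prove the sphere-dimension count directly. Given maximal filtrations $\F_1$ of $\Ind(H_1)$ and $\F_2$ of $\Ind(H_2)$, one concatenates them into a filtration $\F$ of $\Ind(G)$. The crucial observation is that for $v \in V(H_1)$, the closed neighborhood $N_G[v]$ lies in $V(H_1)$, so
\[
\lk_{\Ind(G)}(v) = \Ind(H_1 \setminus N_{H_1}[v]) * \Ind(H_2) \qand \del_{\Ind(G)}(v) = \Ind(H_1 \setminus v) * \Ind(H_2),
\]
and analogously for $v \in V(H_2)$. Thus each step of $\F_1$ applied inside $\Ind(G)$ produces a nontrivial join with the still-spherical $\Ind(H_2)$, and likewise for $\F_2$ afterwards. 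Since $\dep(\F_i) = \dim \Ind(H_i) + 1$, the concatenated filtration has depth $(n+1)+(m+1)$, so by \cref{ternarytype} the complex $\Ind(G)$ has the homology of $S^{n+m+1}$.

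The main obstacle in either route is justifying the topological behaviour of the join beyond the level of homology: for the first approach, one needs the homotopy-level statement that $S^n * S^m \simeq S^{n+m+1}$ and that joining with a contractible space yields a contractible complex; for the filtration approach, one must check that the spherical condition really does propagate through each step once the component structure is introduced, which amounts to verifying that $\Delta_{i+1}$ in the concatenated filtration remains a join of a spherical complex with a spherical complex at every intermediate stage.
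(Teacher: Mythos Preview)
Your first argument is correct and is exactly the route the paper takes: the paper's proof is a one-line appeal to the identity $\Ind(H_1\sqcup H_2)=\Ind(H_1)*\Ind(H_2)$, the fact that $S^n*S^m\simeq S^{n+m+1}$ (citing Hatcher, Exercise~0.18), and \cref{ternaryintro}. You have simply spelled out the implicit steps, including the contractibility direction, which the paper leaves to the reader. Your alternative filtration argument is extra and not needed here, though it is a nice consistency check with \cref{ternarytype}.
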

\begin{proof}
    The proof follows directly from the fact that the join of two spheres is a sphere (see, e.g.~\cite[Exercise~0.18]{MR1867354}), and \cref{ternaryintro}.
\end{proof}

\begin{lemma}[\cite{MR1713484}]\label{l:2} The independence complex of the path graph $G$ on $n$ vertices satisfies the following equalities.
$$
    \Ind(G) \cong \begin{cases}
        S^{k - 1} \quad \ \ \ \text{ if $n = 3k$} \\
        \text{a point} \quad \text{ if $n = 3k + 1$} \\
        S^{k} \quad \quad \ \ \ \text{ if $n = 3k + 2$}
    \end{cases}
$$
\end{lemma}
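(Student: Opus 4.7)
The plan is to proceed by induction on $n$, using the link-deletion Mayer--Vietoris sequence (\cref{t:mvseq}) applied at the vertex $v_2$ of $P_n = v_1 v_2 \cdots v_n$. The base cases $n = 1, 2, 3$ are settled by inspection: $\Ind(P_1)$ is a single vertex (contractible); $\Ind(P_2)$ consists of two isolated vertices, which is $S^0$; and $\Ind(P_3)$ has facets $\{v_1, v_3\}$ and $\{v_2\}$, giving an edge disjoint from a point, homotopy equivalent to $S^0$.

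For the inductive step at $n \ge 4$, the first task is to identify $\del_{\Ind(P_n)}(v_2)$ and $\lk_{\Ind(P_n)}(v_2)$. Removing $v_2$ from $P_n$ produces the disjoint union of the isolated vertex $v_1$ and the path $P_{n-2}$ on $\{v_3, \ldots, v_n\}$. Using the standard observation that the independence complex of a disjoint union of graphs is the join of the independence complexes, we obtain $\del_{\Ind(P_n)}(v_2) = \{v_1\} * \Ind(P_{n-2})$, which is a cone over $v_1$ and therefore contractible. In contrast, $\lk_{\Ind(P_n)}(v_2) = \Ind(P_n \setminus N[v_2]) = \Ind(P_{n-3})$ on the vertex set $\{v_4, \ldots, v_n\}$.

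Since $\tilde H_*(\del_{\Ind(P_n)}(v_2); \kk) = 0$, the Mayer--Vietoris sequence collapses to isomorphisms
\[
  \tilde H_i(\Ind(P_n); \kk) \cong \tilde H_{i-1}(\Ind(P_{n-3}); \kk) \qquad \text{for every } i.
\]
Splitting into the three residue classes $n \equiv 0, 1, 2 \pmod{3}$ and applying the induction hypothesis to $P_{n-3}$ produces exactly the predicted shift in homological dimension. Finally, since every path is a tree and hence a ternary graph, \cref{ternaryintro} upgrades the homology computation to the claimed homotopy equivalence, as each $\Ind(P_n)$ is forced to be either contractible or homotopy equivalent to a sphere. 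The only delicate step is the cone structure of $\del(v_2)$, which rests on the observation that removing $v_2$ isolates $v_1$; once this is in hand the remainder of the argument is a routine unwinding of the long exact sequence.
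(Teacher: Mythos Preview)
Your argument is correct. The paper itself does not prove this lemma; it is quoted from the literature (Kozlov, \cite{MR1713484}) without proof, so there is no in-paper argument to compare against. Your induction via the link-deletion sequence at $v_2$ is the standard route: the key observation that $\del_{\Ind(P_n)}(v_2)$ is a cone over the isolated vertex $v_1$ is exactly what makes the long exact sequence collapse, and your identification $\lk_{\Ind(P_n)}(v_2)=\Ind(P_{n-3})$ is correct. Invoking \cref{ternaryintro} to pass from the homology computation to the homotopy type is a clean way to finish; alternatively one can note directly that $\Ind(P_n)\simeq \Sigma\,\Ind(P_{n-3})$ (the suspension), since the cone $\sta_{\Ind(P_n)}(v_2)$ and the cone $\del_{\Ind(P_n)}(v_2)$ glue along $\lk_{\Ind(P_n)}(v_2)$, which yields the homotopy statement without appealing to Kim's theorem.
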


\begin{proposition}\label{p:Tran}
    Let $G_{n, m}$ be a unicyclic graph as in \cref{f:unicyclic} where $n\geq 3$, $m \geq 1$,   
$3\not | n$, and $n \neq m$ mod $3$. Then 
\begin{align*}
\dep(R/I(G_{n,m}) = \reg(R/I(G_{n,m})) & = 
\begin{cases}
\frac{n - 1}{3} + \frac{m}{3} & \text{$n = 1$, $m = 0$ mod $3$} \\
    \frac{n - 1}{3} + \frac{m - 2}{3} + 1 &  \text{$n = 1$, $m = 2$ mod $3$} \\
    \frac{n - 2}{3} + \frac{m}{3} + 1 &  \text{$n = 2$, $m = 0$ mod $3$} \\
    \frac{n - 2}{3} + \frac{m - 1}{3} + 1 &  \text{$n = 2$, $m = 1$ mod $3$}.
\end{cases}
\end{align*}
\begin{figure}    
\begin{center}
\tikzset{every picture/.style={line width=0.75pt}} 
\begin{tikzpicture}[x=0.75pt,y=0.75pt,yscale=-1,xscale=1]
\draw    (209.91,119.56) -- (262,86) ;
\draw [shift={(262,86)}, rotate = 327.2] [color={rgb, 255:red, 0; green, 0; blue, 0 }  ][fill={rgb, 255:red, 0; green, 0; blue, 0 }  ][line width=0.75]      (0, 0) circle [x radius= 3.35, y radius= 3.35]   ;
\draw [shift={(209.91,119.56)}, rotate = 327.2] [color={rgb, 255:red, 0; green, 0; blue, 0 }  ][fill={rgb, 255:red, 0; green, 0; blue, 0 }  ][line width=0.75]      (0, 0) circle [x radius= 3.35, y radius= 3.35]   ;
\draw    (307,85) -- (262,86) ;
\draw [shift={(262,86)}, rotate = 178.73] [color={rgb, 255:red, 0; green, 0; blue, 0 }  ][fill={rgb, 255:red, 0; green, 0; blue, 0 }  ][line width=0.75]      (0, 0) circle [x radius= 3.35, y radius= 3.35]   ;
\draw [shift={(307,85)}, rotate = 178.73] [color={rgb, 255:red, 0; green, 0; blue, 0 }  ][fill={rgb, 255:red, 0; green, 0; blue, 0 }  ][line width=0.75]      (0, 0) circle [x radius= 3.35, y radius= 3.35]   ;
\draw    (347,86) -- (307,85) ;
\draw [shift={(307,85)}, rotate = 181.43] [color={rgb, 255:red, 0; green, 0; blue, 0 }  ][fill={rgb, 255:red, 0; green, 0; blue, 0 }  ][line width=0.75]      (0, 0) circle [x radius= 3.35, y radius= 3.35]   ;
\draw [shift={(347,86)}, rotate = 181.43] [color={rgb, 255:red, 0; green, 0; blue, 0 }  ][fill={rgb, 255:red, 0; green, 0; blue, 0 }  ][line width=0.75]      (0, 0) circle [x radius= 3.35, y radius= 3.35]   ;
\draw    (395.91,87.56) -- (444,87) ;
\draw [shift={(444,87)}, rotate = 359.33] [color={rgb, 255:red, 0; green, 0; blue, 0 }  ][fill={rgb, 255:red, 0; green, 0; blue, 0 }  ][line width=0.75]      (0, 0) circle [x radius= 3.35, y radius= 3.35]   ;
\draw [shift={(395.91,87.56)}, rotate = 359.33] [color={rgb, 255:red, 0; green, 0; blue, 0 }  ][fill={rgb, 255:red, 0; green, 0; blue, 0 }  ][line width=0.75]      (0, 0) circle [x radius= 3.35, y radius= 3.35]   ;
\draw    (170.41,90.87) -- (209.91,119.56) ;
\draw [shift={(209.91,119.56)}, rotate = 36] [color={rgb, 255:red, 0; green, 0; blue, 0 }  ][fill={rgb, 255:red, 0; green, 0; blue, 0 }  ][line width=0.75]      (0, 0) circle [x radius= 3.35, y radius= 3.35]   ;
\draw [shift={(170.41,90.87)}, rotate = 36] [color={rgb, 255:red, 0; green, 0; blue, 0 }  ][fill={rgb, 255:red, 0; green, 0; blue, 0 }  ][line width=0.75]      (0, 0) circle [x radius= 3.35, y radius= 3.35]   ;
\draw    (225,166) -- (209.91,119.56) ;
\draw [shift={(209.91,119.56)}, rotate = 252] [color={rgb, 255:red, 0; green, 0; blue, 0 }  ][fill={rgb, 255:red, 0; green, 0; blue, 0 }  ][line width=0.75]      (0, 0) circle [x radius= 3.35, y radius= 3.35]   ;
\draw [shift={(225,166)}, rotate = 252] [color={rgb, 255:red, 0; green, 0; blue, 0 }  ][fill={rgb, 255:red, 0; green, 0; blue, 0 }  ][line width=0.75]      (0, 0) circle [x radius= 3.35, y radius= 3.35]   ;
\draw    (209.91,212.44) -- (170.41,241.13) ;
\draw [shift={(170.41,241.13)}, rotate = 144] [color={rgb, 255:red, 0; green, 0; blue, 0 }  ][fill={rgb, 255:red, 0; green, 0; blue, 0 }  ][line width=0.75]      (0, 0) circle [x radius= 3.35, y radius= 3.35]   ;
\draw [shift={(209.91,212.44)}, rotate = 144] [color={rgb, 255:red, 0; green, 0; blue, 0 }  ][fill={rgb, 255:red, 0; green, 0; blue, 0 }  ][line width=0.75]      (0, 0) circle [x radius= 3.35, y radius= 3.35]   ;
\draw    (121.59,90.87) -- (82.09,119.56) ;
\draw [shift={(82.09,119.56)}, rotate = 144] [color={rgb, 255:red, 0; green, 0; blue, 0 }  ][fill={rgb, 255:red, 0; green, 0; blue, 0 }  ][line width=0.75]      (0, 0) circle [x radius= 3.35, y radius= 3.35]   ;
\draw [shift={(121.59,90.87)}, rotate = 144] [color={rgb, 255:red, 0; green, 0; blue, 0 }  ][fill={rgb, 255:red, 0; green, 0; blue, 0 }  ][line width=0.75]      (0, 0) circle [x radius= 3.35, y radius= 3.35]   ;
\draw    (67,166) -- (82.09,212.44) ;
\draw [shift={(82.09,212.44)}, rotate = 72] [color={rgb, 255:red, 0; green, 0; blue, 0 }  ][fill={rgb, 255:red, 0; green, 0; blue, 0 }  ][line width=0.75]      (0, 0) circle [x radius= 3.35, y radius= 3.35]   ;
\draw [shift={(67,166)}, rotate = 72] [color={rgb, 255:red, 0; green, 0; blue, 0 }  ][fill={rgb, 255:red, 0; green, 0; blue, 0 }  ][line width=0.75]      (0, 0) circle [x radius= 3.35, y radius= 3.35]   ;
\draw    (121.59,241.13) -- (170.41,241.13) ;
\draw [shift={(170.41,241.13)}, rotate = 360] [color={rgb, 255:red, 0; green, 0; blue, 0 }  ][fill={rgb, 255:red, 0; green, 0; blue, 0 }  ][line width=0.75]      (0, 0) circle [x radius= 3.35, y radius= 3.35]   ;
\draw [shift={(121.59,241.13)}, rotate = 360] [color={rgb, 255:red, 0; green, 0; blue, 0 }  ][fill={rgb, 255:red, 0; green, 0; blue, 0 }  ][line width=0.75]      (0, 0) circle [x radius= 3.35, y radius= 3.35]   ;
\draw    (209.91,212.44) -- (225,166) ;
\draw [shift={(225,166)}, rotate = 288] [color={rgb, 255:red, 0; green, 0; blue, 0 }  ][fill={rgb, 255:red, 0; green, 0; blue, 0 }  ][line width=0.75]      (0, 0) circle [x radius= 3.35, y radius= 3.35]   ;
\draw [shift={(209.91,212.44)}, rotate = 288] [color={rgb, 255:red, 0; green, 0; blue, 0 }  ][fill={rgb, 255:red, 0; green, 0; blue, 0 }  ][line width=0.75]      (0, 0) circle [x radius= 3.35, y radius= 3.35]   ;
\draw    (121.59,90.87) -- (170.41,90.87) ;
\draw [shift={(170.41,90.87)}, rotate = 360] [color={rgb, 255:red, 0; green, 0; blue, 0 }  ][fill={rgb, 255:red, 0; green, 0; blue, 0 }  ][line width=0.75]      (0, 0) circle [x radius= 3.35, y radius= 3.35]   ;
\draw [shift={(121.59,90.87)}, rotate = 360] [color={rgb, 255:red, 0; green, 0; blue, 0 }  ][fill={rgb, 255:red, 0; green, 0; blue, 0 }  ][line width=0.75]      (0, 0) circle [x radius= 3.35, y radius= 3.35]   ;
\draw    (67,166) -- (82.09,119.56) ;
\draw [shift={(82.09,119.56)}, rotate = 288] [color={rgb, 255:red, 0; green, 0; blue, 0 }  ][fill={rgb, 255:red, 0; green, 0; blue, 0 }  ][line width=0.75]      (0, 0) circle [x radius= 3.35, y radius= 3.35]   ;
\draw [shift={(67,166)}, rotate = 288] [color={rgb, 255:red, 0; green, 0; blue, 0 }  ][fill={rgb, 255:red, 0; green, 0; blue, 0 }  ][line width=0.75]      (0, 0) circle [x radius= 3.35, y radius= 3.35]   ;
\draw    (82.09,212.44) -- (121.59,241.13) ;
\draw [shift={(121.59,241.13)}, rotate = 36] [color={rgb, 255:red, 0; green, 0; blue, 0 }  ][fill={rgb, 255:red, 0; green, 0; blue, 0 }  ][line width=0.75]      (0, 0) circle [x radius= 3.35, y radius= 3.35]   ;
\draw [shift={(82.09,212.44)}, rotate = 36] [color={rgb, 255:red, 0; green, 0; blue, 0 }  ][fill={rgb, 255:red, 0; green, 0; blue, 0 }  ][line width=0.75]      (0, 0) circle [x radius= 3.35, y radius= 3.35]   ;
\draw (361,84) node [anchor=north west][inner sep=0.75pt]   [align=left] {$\displaystyle \dotsc $};
\draw (188,120) node [anchor=north west][inner sep=0.75pt]   [align=left] {$\displaystyle x_{1}$};
\draw (227,169) node [anchor=north west][inner sep=0.75pt]   [align=left] {$\displaystyle x_{2}$};
\draw (211.91,215.44) node [anchor=north west][inner sep=0.75pt]   [align=left] {$\displaystyle x_{3}$};
\draw (45,106) node [anchor=north west][inner sep=0.75pt]   [align=left] {$\displaystyle x_{n-2}$};
\draw (97,66) node [anchor=north west][inner sep=0.75pt]   [align=left] {$\displaystyle x_{n-1}$};
\draw (170,69) node [anchor=north west][inner sep=0.75pt]   [align=left] {$\displaystyle x_{n}$};
\draw (172.41,244.13) node [anchor=north west][inner sep=0.75pt]   [align=left] {$\displaystyle x_{4}$};
\draw (111,245) node [anchor=north west][inner sep=0.75pt]   [align=left] {$\displaystyle x_{5}$};
\draw (244,62) node [anchor=north west][inner sep=0.75pt]   [align=left] {$\displaystyle y_{1}$};
\draw (299,58) node [anchor=north west][inner sep=0.75pt]   [align=left] {$\displaystyle y_{2}$};
\draw (337,60) node [anchor=north west][inner sep=0.75pt]   [align=left] {$\displaystyle y_{3}$};
\draw (382,61) node [anchor=north west][inner sep=0.75pt]   [align=left] {$\displaystyle y_{m-1}$};
\draw (436,61) node [anchor=north west][inner sep=0.75pt]   [align=left] {$\displaystyle y_{m}$};
\end{tikzpicture}
\end{center}
\caption{The unicyclic graph in~\cref{p:Tran}}\label{f:unicyclic}
\end{figure}
\end{proposition}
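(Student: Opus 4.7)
Since the only induced cycle of $G_{n,m}$ is $C_n$ and $3 \nmid n$, the graph $G_{n,m}$ is ternary, so $\Delta := \Ind(G_{n,m})$ is spherical by \cref{ternaryintro}. The plan is to apply \cref{l:diagramchasing} to $\Delta$ at the hub vertex $x_1$ where the cycle meets the pendant path, and then use the resulting sphere dimension to read off the depth via \cref{ternarytype}.

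The neighbors of $x_1$ in $G_{n,m}$ are $\{x_2,x_n,y_1\}$, so the link and deletion of $\Delta$ at $x_1$ decompose as independence complexes of disjoint unions of paths:
\[
    \lk_\Delta(x_1) = \Ind(P_{n-3}\sqcup P_{m-1}), \qquad
    \del_\Delta(x_1) = \Ind(P_{n-1}\sqcup P_m),
\]
where $P_{n-3}$ is the path on $x_3,\ldots,x_{n-1}$, $P_{m-1}$ is the path on $y_2,\ldots,y_m$, and analogously for $\del_\Delta(x_1)$. By \cref{l:1}, each of these is either contractible (when at least one path factor has a number of vertices $\equiv 1\pmod 3$) or homotopy equivalent to a join of spheres whose dimensions are read off from \cref{l:2} via the formula $\dim(S^a \ast S^b) = a+b+1$. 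The hypothesis $n\not\equiv m\pmod 3$ guarantees that exactly one of $\lk_\Delta(x_1), \del_\Delta(x_1)$ is contractible in each of the four listed cases: the link collapses when $n\equiv 1\pmod 3$ (because $n-3\equiv 1$), and the deletion collapses when $n\equiv 2\pmod 3$ (because $n-1\equiv 1$).

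\cref{l:diagramchasing} then identifies the homology of $\Delta$: in Cases~1--2 it equals that of $\del_\Delta(x_1)$, and in Cases~3--4 it is the one-degree shift of that of $\lk_\Delta(x_1)$. Computing the join dimensions yields $\Delta \simeq S^d$ with $d$ equal to $(n-1)/3+m/3-1$, $(n-1)/3+(m-2)/3$, $(n+m-2)/3$ and $(n+m-3)/3$ in the four cases respectively. Adding $1$ via \cref{ternarytype} produces the four depth formulas in the statement. Finally, \cref{algcombinvs} equates the combinatorial depth of $\Delta$ with $\dep(R/I(G_{n,m}))$, and \cref{depthreg} gives $\reg(R/I(G_{n,m})) = \dep(R/I(G_{n,m}))$ since $\Delta$ is non-acyclic and spherical.

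The main obstacle is the bookkeeping in the boundary cases: when $m=1$, $P_{m-1}=P_0$ is empty and $\Ind(P_0) = \{\emptyset\}$ has the homology of $S^{-1}$, which must act as the identity under the join for the formulas to remain consistent with Case~4; similarly one must check small values of $n$ where $P_{n-3}$ is near-degenerate. These are routine verifications against the join formula once $S^{-1}$ is interpreted correctly.
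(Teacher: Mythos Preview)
Your argument is correct and follows essentially the same route as the paper: split at the hub vertex $x_1$, recognise the link and deletion as independence complexes of disjoint unions of paths, invoke \cref{l:1,l:2} to determine which side is contractible and which is a sphere, and then use the trichotomy (the paper cites \cref{triangles}, you cite \cref{l:diagramchasing}, but these carry the same information here) together with \cref{ternarytype}, \cref{algcombinvs} and \cref{depthreg} to finish. Your explicit remark that the hypothesis $n\not\equiv m\pmod 3$ forces exactly one of the two sides to collapse, and your attention to the degenerate cases $P_0$ and $P_{m-1}$ at $m=1$, are helpful clarifications the paper leaves implicit.
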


\begin{proof}
Note that $G_{n,m}\lr{x_1}{\emptyset}$ and $G_{n,m}\lr{\emptyset}{x_1}$ are both disconnected graphs where every connected component is a path graph. By \cref{l:2}, if a connected component of a ternary graph is a path graph on $3k + 1$ vertices, its independence complex is contractible.
\begin{enumerate}
    \item If $n = 3k + 1$ and $m = 3l$, then the two connected components of $G_{n,m}\lr{x_1}{\emptyset}$ have $3(k - 1) + 1$ and $m - 1$ vertices respectively, so $d\lr{x_1}{\emptyset} = \ast$ by \cref{l:2,l:1}. On the other hand, $G_{n,m}\lr{\emptyset}{x_1}$ has two connected components that are paths with $3k$ and $3l$ vertices respectively. By \cref{l:2}, we conclude $\Ind(G_{n,m}\lr{\emptyset}{x_1}) \cong S^{k + l - 1}$ and by \cref{triangles} we conclude $\Ind(G_{n,m}) \cong S^{k + l - 1}$.
    \item If $n = 3k + 1$ and $m = 3l + 2$, then just as in the previous case $d\lr{x_1}{\emptyset} = \ast$ and $d\lr{\emptyset}{x_1} \neq \ast$, so $d\lr{\emptyset}{\emptyset} \neq \ast$. Since $m = 3l + 2$ and $G_{n,m}\lr{\emptyset}{x_1}$ has one connected component with $3k$, and one with $3l + 2$ vertices, by \cref{l:1,l:2} we conclude $d\lr{\emptyset}{x_1} = k + l$ and by \cref{triangles}, we conclude $\Ind(G_{n,m}) \cong S^{k + l}$.
\end{enumerate}
By similar arguments, if $n = 3k + 2$ and $m = 3k$, we conclude $\Ind(G_{n,m}) \cong S^{k + l}$ and if $n = 3k + 2$ and $m = 3k + 1$, $\Ind(G_{n,m}) \cong S^{k + l}$.

In all the cases above, \cref{ternarytype}, \cref{algcombinvs,depthreg} imply $\dep(R/I(G)) = \reg(R/I(G))$. The computations above together with \cref{ternarytype,algcombinvs} then imply the equalities as desired.
\end{proof} 

We end the paper with some natural questions that follow from our work.

It is known that for a simplicial complex $\Delta$ that is not a cone over one of its vertices, $R/I_\Delta$ being Gorenstein implies $\Delta$ is not acyclic. \cref{t:topbetti} restricts the possible nonzero homology groups of $\Sigma$, where $\Sigma$ are induced subgraphs of $\Delta$. One question to consider is the following.

\begin{question}
    Let $\Delta$ be a spherical complex that is not a cone over any of its vertices. Is it true that
    $$
        \text{$R/I_\Delta$ is Gorenstein} \iff \text{ $\Delta$ is not acyclic and $R/I_\Delta$ is Cohen-Macaulay?}
    $$

\end{question}

Note that $\implies$ is clear by \cite[Theorem 5.1, Chapter 2]{stanleybook}.

\begin{question}\label{q:star-packing}
    To which extent can previous bounds on algebraic invariants such as projective dimension, depth and regularity be extended to equalities for ternary graphs? In particular, if a ternary graph $G$ has a noncontractible independence complex, let $\zeta(G)$ be the star packing invariant defined in \cite{MR3213742}, is it true that
    $$
        \dep(R/I(G)) = \dep(\Ind(G)) = \zeta(G)?
    $$
\end{question}

{\bf Acknowledgments.}
We would like to thank Susan Morey for pointing us to the results in~\cite{MR4093701}, and Seyed Amin Seyed Fakhari for helpful comments. Faridi's research is supported by
NSERC Discovery Grant 2023-05929.

\bibliography{bibliography.bib}
\bibliographystyle{plain}


\end{document}